%% file: main.tex
\documentclass{article}
\usepackage{arXiv}

\usepackage{todonotes}

\usepackage{fancyhdr}
\title{
    \textbf{
        Analysis of the Ill-Conditioning of the Discrete Inverse Laplace Transform in Monte Carlo Simulations of Quantum Many-Body Systems
    }
}

\author[1]{Phil-Alexander Hofmann\thanks{\href{mailto:p.hofmann@hzdr.de}{p.hofmann@hzdr.de}}}
\author[2,1]{Thomas Chuna}
\author[2,1]{Tobias Dornheim}
\author[3]{Michael Hecht}

\affil[1]{Center for Advanced Systems Understanding, 
Helmholtz-Zentrum Dresden-Rossendorf, 
02826 Görlitz, 
Germany}

\affil[2]{Institute of Radiation Physics, 
Helmholtz-Zentrum Dresden-Rossendorf, 
01328 Dresden, 
Germany}

\affil[3]{Mathematical Institute, 
University of Wroc{\l}aw, 
50-384 Wroc{\l}aw, 
Poland}

\begin{document}


\maketitle

\begin{abstract}
    %
    %
    Analytic continuation of imaginary-time quantum Monte Carlo data to real-frequency spectra requires the inversion of a severely ill-posed two-sided Laplace transform and arises naturally in quantum many-body calculations of dynamic properties. 
    %
    %
    In this work, we distinguish the intrinsic ill-posedness of the continuous inverse two-sided Laplace problem from the conditioning of its finite-dimensional discretization.
    For equidistant sampling and reconstruction grids, we express the discrete problem via a diagonally scaled monomial Vandermonde matrix with exponentially distributed nodes. 
    Imposing the physical detailed-balance symmetry transforms the discretization into a diagonally scaled Chebyshev--Vandermonde system.
    Exploiting these structures, we derive explicit lower and upper bounds on the condition numbers in terms of the physical and discretization parameters.
    %
    %
    For the unconstrained discretization, our bounds reveal super-exponential growth of the condition number with the reconstruction dimension, of the form \(\exp(\mathcal{O}(n\log n))\), which cannot be removed by increasing the number of imaginary-time samples alone.
    Detailed-balance substantially improves the conditioning, especially in the practically relevant pre-asymptotic regime, although the asymptotic super-exponential dependence remains.
    In both cases, our bounds identify a low-dimensional regime in which the super-exponential contribution is suppressed and stable reconstruction remains feasible.
    %
    %
    These results provide a mathematical explanation for the effectiveness of low-dimensional spectral representations and detailed-balance in analytic continuation. 
    While they do not remove the fundamental ill-posedness, they substantially mitigate the ill-conditioning of its finite-dimensional discretization, delay the onset of its catastrophic super-exponential growth and thereby enlarge the range of numerically accessible reconstructions.
\end{abstract}

\keywords{
    Discrete inverse Laplace transform \and
    analytic continuation \and
    quantum Monte Carlo \and 
    quantum many-body systems
}


\section{Introduction}
\label{sec:introduction}

Motivated by the inverse two-sided Laplace transform problem arising in \emph{quantum Monte Carlo} (QMC) simulations of quantum many-body systems~\cite{Boninsegni2006a, Filinov2012, Nolting2009, Krilov1999, Rabani2002}, we study the numerical problem of recovering a discrete function \(f(t)\) from its discrete two-sided Laplace transform \(F(s)\),
\begin{equation}\label{eq:two_side_laplace_transform}
    F(s) = \int_{-\infty}^\infty \exp(-st) f(t) \, dt \, ,
\end{equation}
where \(F(s)\) is given on the real interval \(s \in [0,\beta]\). Though obtaining a reliable estimate of \(f(t)\) is difficult, solving this problem reliably would enable the estimation of dynamic material properties in a vast number of physical systems, ranging from solid state physics applications~\cite{Mishchenko2000,ShaoSandvik_PhysRep_2023} to ultracold atoms~\cite{Ferre2016,Filinov2012,Saccani_PRL_2012,Dornheim2022,Boninsegni2018} and exotic warm dense quantum plasmas~\cite{Dornheim_PRL_2018,Filinov_PRB_2023,Chuna_PRB_2025}. 

The problem is colloquially referred to as the notorious \emph{analytic-continuation} (AC) problem. The AC problem is known to be ill-posed~\cite{Trefethen2020}. Most practitioners understand the AC as the inverse Laplace transform, which is also known to be ill-posed~\cite{Bertero_RoyalSociety_1982, Epstein2008, Widder2015, shi_CPC_2023, Hadamard1902}. As such dense numerical discretizations of the problem \eqref{eq:two_side_laplace_transform} are ill-conditioned and direct inversion is futile.

To address these challenges and solve for \(f(t)\) from \(F(s)\), approaches have been developed that stabilize the inversion by leveraging the discretization. For example, some approaches start at a low-dimensional grid and increase the dimension until the data is well represented~\cite{Ghanem_PRB_2020} or start at large dense grids and then reduce the grid domain and dimension to improve stability~\cite{bonny_NMR_2020}. 
Other approaches exploit prior knowledge, such as the detailed-balance condition 
\begin{equation}
    f(-t)=f(t) \exp(-\beta t),
\end{equation}
where \(\beta=1/T\) denotes the inverse temperature, to reduce the discretization dimension~\cite{chuna_JPA_2025, Robles2025}. 
The above approaches are motivated by the empirical observation that low-dimensional discretizations are stable, but a thorough analysis of the conditioning of the discretized problem is lacking.

To address this literature gap, we revisit the ill-posedness of the \textit{un-discretized} problem and introduce a weight function, resembling the detailed-balance condition, that mitigates the severity of the Laplace transforms forward-backward amplification observed in Epstein et al.~\cite{Epstein2008}. 
Then, we characterize how the \textit{discretized} AC problem's conditioning is affected by enforcing detailed-balance and changing the number of points in the discretization grid.
We first show that discretizing the two-sided Laplace transform leads to a polynomial regression in the monomial basis, i.e., to a Vandermonde matrix with exponentially distributed nodes. 
We then impose detailed-balance, which transforms the problem into polynomial regression in the Chebyshev basis, i.e., into a Chebyshev Vandermonde-type system with improved conditioning bounds and identify discretization regimes where the condition number remains manageable.
Figure~\ref{fig:resultspreview} previews our main results.
In the low-dimensional regime (\(2 \le n_t \le 30\)), it demonstrates that enforcing the physical constraint known as detailed-balance~\cite{chuna_JPA_2025, Robles2025} improves the discrete inverse problem's conditioning and that low-dimensional discretizations are more stable, justifying sparse kernel-based approaches~\cite{Robles2025, chuna_arXiv_2026}. In the higher-dimensional regime (\(70 \le n_t \le 100\)), the numerical condition numbers reach finite-precision limit and fail to resolve the actual condition numbers, whereas the analytical bounds continue to reveal their super-exponential growth. \\

\begin{figure}[h]
    \centering
    \includegraphics[width=0.89\linewidth]{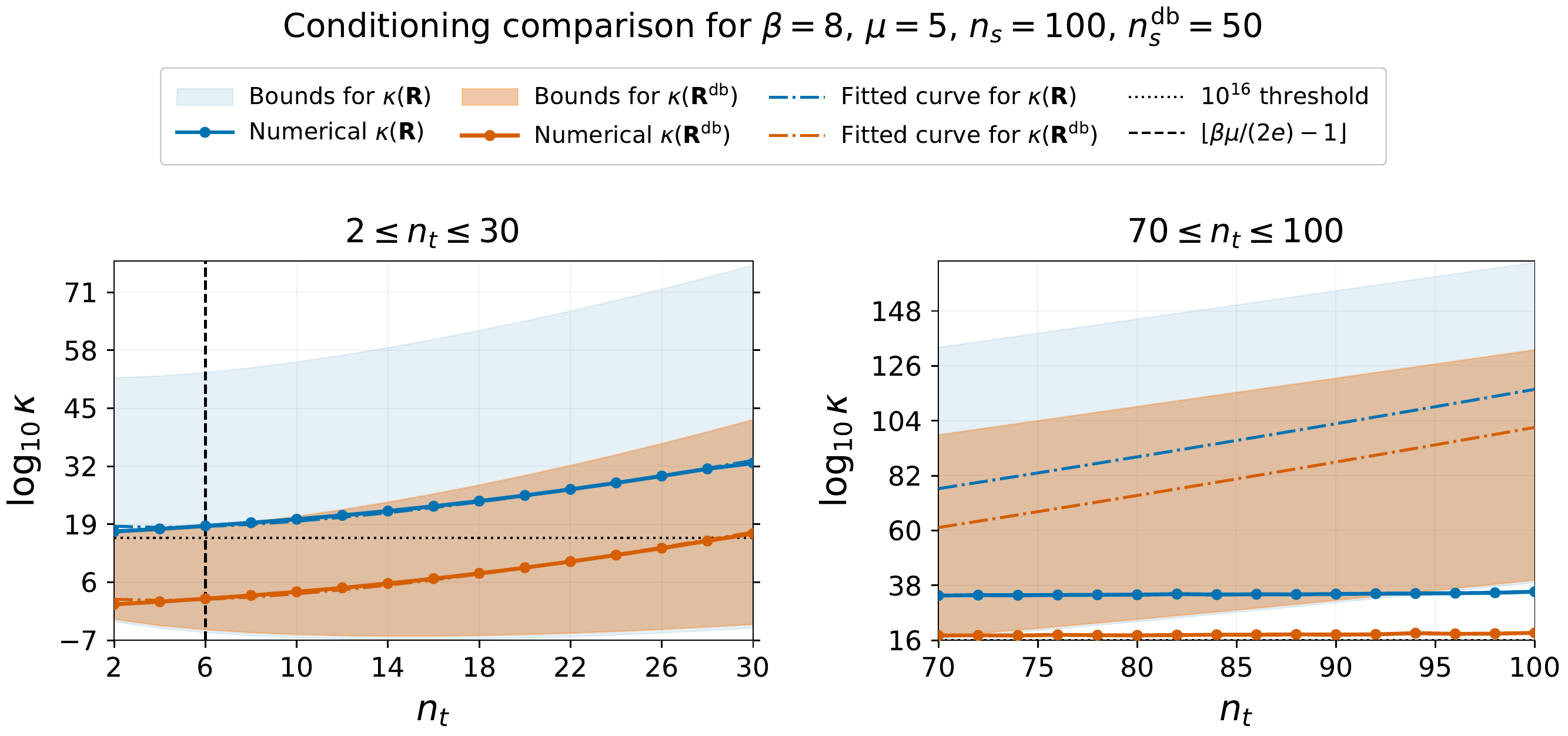}%
    \caption{
    Conditioning of the general and detailed-balanced AC discretizations introduced in Sections~\ref{sec:cond_ac_problem} and~\ref{sec:cond_db_ac_problem} for fixed \(\beta=8\), \(\mu=5\) and \(n_s = 100\). To retain the same grid resolution, the detailed-balance discretization uses \(n_s^{\mathrm{db}} = n_s/2 = 50\) and \(n_t^{\mathrm{db}} = n_t/2\).
    Solid curves show the numerical condition numbers, while the shaded regions indicate the analytical bounds from Theorems~\ref{theo:general_AC_bounds} and~\ref{theo:db_AC_bounds}.
    The dash-dotted curves show super-exponential fits to the numerical condition numbers obtained from the data for \(n_t=2,4,\ldots,30\) and extrapolated to \(n_t \ge 30\).
    The horizontal dotted line marks the numerical threshold \(10^{16}\), which is treated as numerical infinity, while the vertical dashed line at \(n_t = \lfloor\beta\mu/(2e)-1\rfloor\) indicates the boundary of the identified low-dimensional regime.
    }
    \label{fig:resultspreview}
\end{figure}

The paper is organized as follows. Section~\ref{sec:introduction} describes the scientific context in which the inverse two-sided Laplace transform \eqref{eq:two_side_laplace_transform} arises, as well as existing literature on this problem. Section~\ref{sec:two_sided_laplace_transform} establishes the ill-posedness of the continuous problem and introduces a weighted setting, resembling detailed-balance, and determines the associated forward and backward amplification. Section~\ref{sec:cond_ac_problem} presents the lower and upper bounds for the conditioning of the discrete problem. For comparison, Section~\ref{sec:cond_db_ac_problem} considers the detailed-balance variant of the discrete problem, demonstrating its improved conditioning. Finally, Section~\ref{sec:conclusion} summarizes the main findings and discusses possible directions for future work.

\subsection{Mathematical presentation of the analytic continuation problem}
\label{subsec:the_ac_problem}

In theory, problem \eqref{eq:two_side_laplace_transform} can be solved analytically via an explicit inversion formula based on contour integration in the complex plane, dating back to Mellin \cite{Mellin1897, Bromwich1926}
\begin{equation}
    \label{eq:analytic_inversion}
    f(t) 
    = 
    \frac{1}{2\pi i} \lim_{T \rightarrow \infty} \int_{\gamma - iT}^{\gamma + iT} \exp(st) F(s) ds,
\end{equation}
where \(\gamma\) is chosen in the strip of convergence of the two-sided Laplace transform \(F\).
Hence, the inversion formula requires knowledge of \(F\) on the vertical contour
\begin{equation}
    \Gamma_\gamma \coloneqq \{\gamma + iT \, : \, T \in \mathbb R\},
\end{equation}
which extends far beyond the data available on the real interval \(s \in [0,\beta]\). 

From the perspective of the classic Bromwich formula, this means that one would first have to infer the values of \(F\) away from the real interval \(s \in [0,\beta]\), namely on the vertical contour \(\Gamma_\gamma\)
\begin{equation}
    F|_{[0,\beta]} \quad {\leadsto} \quad F|_{\Gamma_\gamma}.
\end{equation}
In the exact, noiseless setting, uniqueness of the analytic continuation follows from analyticity, since values on a real interval with an accumulation point determine the function on the connected component of its domain of analyticity. However, this uniqueness is highly unstable. Small perturbations of the data on \([0, \beta]\) may be amplified dramatically when continuing the function away from the real axis. 

\subsection{Physical origin of the analytic continuation problem}
\label{subsec:the_ac_problem}

We first provide the physical origin of the \emph{AC problem} arising in Monte Carlo simulations of quantum many-body systems.

A broad class of experiments probes the microscopic properties of materials by scattering particles off the sample. The likelihood of scattering is related to the structure of a material, \textit{i.e.}, time and space correlations between particles in the material~\cite{VanHove_PhysRev_1954}. There is a wealth of theory relating the spectral function of these correlation functions to macroscopic material properties, for example, conductivity~\cite{Kubo_RepProgressPhys_1966}. 
These macroscopic material properties are needed to simulate large scale systems.
The time correlation $\tilde{f}$ and its power spectral density $f$ are, by definition, related by the Fourier transform~\cite{pavliotis2014stochastic},
\begin{align}\label{eq:FourierTransform}
    \tilde{f}(z) = \int_{-\infty}^\infty \exp(-izt) f(t) dt \, ,
\end{align}
where $z$ and $t$ are both real. In short, the connections between experimentally measured scattering spectra, microscopic correlation functions, their associated spectral functions, and material properties are a central theme in physics, linking simulations and experiments~\cite{Dornheim_PoP_2023, Lovato_PRC_2015, Landig_NatComm_2015, Carlson_PRC_2002, lovesey_theory_1984, glenzer_XRTSreview}.

Though the typical workflow for a computational physicist is to simulate a system, compute a correlation function $\tilde{f}$, and estimate its spectral function $f$ via Fourier transform, conventional finite-temperature QMC methods do not sample real-time correlations $\tilde{f}$. Instead they access correlation functions defined along the imaginary-time axis, so-called imaginary-time correlation functions (ITCF). This specifies the analytic continuation of $\tilde{f}(z)$ for $z$ into the complex plane as $z \rightarrow -i s$ with $0 < s < \beta$~\cite{Matsubara1955, Jarrell1996}. This analytic continuation is needed so that classic computers can use Markov chain techniques to simulate the quantum system~\cite{Boninsegni2006b}.

The analytic continuation $z \rightarrow -i s$ alters this typical workflow so that instead of estimating the spectrum $f$ via an inverse Fourier transform~\eqref{eq:FourierTransform}, $f$ is estimated via an inverse Laplace transform~\eqref{eq:two_side_laplace_transform}. To see this, substitute $z = -i s$, so that $\exp(-izt) = \exp(-st)$ and $\tilde{f}(-is) = F(s)$. A subtle point, arising naturally from the physics, is that after analytic continuation the ITCF \( F(s)\) diverges outside of $ 0 \le s \le \beta$.~\cite{FetterWalecka_QuantumManyBody_1971}.

In summary, the \emph{AC problem}, (\textit{i.e.}, reconstructing $f$ from finite-interval imaginary-time data via the Laplace transform) is equivalent to analytically continuing the QMC correlation function from imaginary to real time, and then Fourier transforming to obtain the real-frequency spectrum.

\subsection{Related work}
\label{subsec:related_work}

The inversion of the two-sided Laplace transform has a long history. Classic direct numerical inversion methods include those of Schapery~\cite{Schapery1962}, Stehfest~\cite{Stehfest1970}, Talbot~\cite{Talbot1979}, Weeks~\cite{Weeks1966}, and de Hoog et al. \cite{deHoog1982}. 
These methods laid the foundation for much of the numerical inversion literature by providing explicit formulas for approximations of the analytic inverse transform. We refer to Bellman et al. \cite{Bellman1963}, Davies and Martin \cite{Davies1979}, Duffy \cite{Duffy1993}, Cohen \cite{Cohen2007}, and Kuhlman \cite{Kuhlman2013} for reviews and comparative analyses.

Unfortunately, these methods and, in particular, the methods of Schapery and Stehfest operate purely on real-valued arguments, however contour-based approaches require data \(F(s)\) for \(s\) lying in the general complex plane.
Such data are not available in the discretized setting of the \emph{AC problem} considered here. Instead, recent investigations have applied Pad\'e approximations \cite{Vidberg_LowTempPhys_1977, Schott_PRB_2016, Tripolt2019} and moment-based reconstructions \cite{fei_PRL_2021, Filinov_PRB_2023, Dornheim2023}. 

Complementary to direct approaches, many methods instead formulate the inversion as a regularized optimization problem, typically as
\begin{equation}\label{eq:costfxn}
    \|F - \mathcal L f\|^2 + \alpha r(f) \longrightarrow \min_f,
\end{equation}
with \(\alpha > 0\) denoting the regularization parameter and \(r\) the regularization functional. This includes entropic methods \cite{Jarrell1996, Silver1990, BurnierRothkopf_PRL_2013, Boninsegni2018, chuna_JPA_2025}, sparsity-promoting Lasso methods \cite{Otsuki2017}, variational regularization methods~\cite{prokofev_JETP_2013, Han_PRB_2022}, and stochastic sampling methods \cite{Dornheim2022, Ferre2016, Filinov2016, Vitali2010, Mishchenko2000}.
Recently, machine learning methods, in particular neural network-based approaches, have been proposed to learn the mapping from Laplace data to the underlying spectral function in a data-driven manner~\cite{Kades_PRD_2020, Fournier_PRL_2020, shi_CPC_2023}.
Instead of solving \eqref{eq:costfxn}, alternative regularized approaches are built on smearing functions~\cite{Hansen_PRB-GBG_2019, bailas_PTEP_2020, DelDebbio_EuroPhysC_2025}, which pay homage to the seminal Backus--Gilbert method~\cite{Backus1968, Backus1970, Press2007}.

Compared with direct inversion methods, these regularized approaches use prior information about the spectral function \(f\), such as positivity, smoothness, sparsity, moment constraints, or other assumptions. Thus, regularized approaches do not merely approximate the formal analytic inverse, but reformulate the ill-posed inversion as a stabilized reconstruction problem adapted to available data and expected properties of \(f\). A clear example of this is the kernel-based reformulation of \eqref{eq:costfxn}, detailed in \cite{Robles2025}, which remains stable even when data quality deteriorates~\cite{chuna_JCP_2025}.

For convenience, Table~\ref{tab:notation} serves as a reference listing the notation used throughout the paper. \\

\begin{table}[h]
    \centering
    \caption{Notation used throughout the paper.}
    \label{tab:notation}
    \renewcommand{\arraystretch}{1.15}
    \setlength{\tabcolsep}{8pt}
    \small
    \begin{tabular}{|c|l|c|l|}
        \hline
        \(f(t)\)
        & spectral function
        &
        \(F(s)\)
        & Laplace-transformed data
        \\

        \(t\)
        & spectral variable
        &
        \(s\)
        & Laplace variable
        \\

        \(\mu\)
        & spectral truncation parameter
        &
        \(\beta\)
        & inverse temperature
        \\

        \(L^1(\mathbb R,w\,dt)\)
        & weighted signal space
        &
        \(L^1([0,\beta],ds)\)
        & data space
        \\

        \(w(t)\)
        & weight function \(1+\exp(-\beta t)\)
        &
        \(\mathcal L\)
        & two-sided Laplace transform
        \\

        \(\mathcal L^{-1}\)
        & inverse two-sided Laplace transform
        &
        \(\operatorname{Ran}(\mathcal L)\)
        & range of \(\mathcal L\)
        \\

        \hline

        \(\delta_t\)
        & Dirac measure centered at \(t\)
        &
        \(\mathcal F\)
        & family of localized perturbations
        \\

        \(a_w\)
        & forward-backward amplification
        &
        \(R_w\)
        & weighted Rayleigh quotient
        \\

        \hline

        \(n_t\)
        & number of \(t\)-grid intervals
        &
        \(n_s\)
        & number of \(s\)-grid intervals
        \\

        \(h_t\)
        & \(t\)-grid spacing
        &
        \(h_s\)
        & \(s\)-grid spacing
        \\

        \(t_j\)
        & \(j\)th \(t\)-grid node
        &
        \(s_i\)
        & \(i\)th \(s\)-grid node
        \\

        \(\bm\alpha\)
        & reconstruction coefficients
        &
        \(\bm y\)
        & discrete Laplace data
        \\

        \hline

        \(\kappa(\cdot)\)
        & spectral-norm condition number
        &
        \(q\)
        & oversampling factor
        \\

        \(\bm R\)
        & general AC regression matrix
        &
        \(\bm R^{\mathrm{db}}\)
        & detailed-balance regression matrix
        \\

        \(\bm V\)
        & Vandermonde matrix
        &
        \(\bm V^{\mathrm{db}}\)
        & Chebyshev--Vandermonde matrix
        \\

        \(x^j\)
        & \(j\)th monomial
        &
        \(T_j\)
        & \(j\)th Chebyshev polynomial
        \\

        \(x_i\)
        & transformed Vandermonde node
        &
        \(x_i^{\mathrm{db}}\)
        & transformed detailed-balance node
        \\
        
        \hline
    \end{tabular}
\end{table}
\section{The two-sided Laplace transform on a weighted \(L^1\) space}
\label{sec:two_sided_laplace_transform}
In this section, we define the two-sided Laplace transform and analyze the severity of its inverse transform.
We first introduce the forward operator on a suitable weighted \(L^1\) space, providing a setting in which it is well-defined and bounded. 
We then show that its inverse is unbounded, and hence that the inverse problem is ill-posed, with an exponential degree of ill-posedness.
Finally, we quantify the forward-backward amplification of localized perturbations. 
In the unweighted formulation, this amplification can grow exponentially in \(\beta\mu\), see~\eqref{eq:amp_unweighted}, whereas the weighted formulation reduces this growth to at most linear, see~\eqref{eq:amp_weighted}.

In particular, we distinguish between the \emph{degree of ill-posedness}~\cite{Hofmann2010} of the infinite-dimensional inverse problem and the \emph{ill-conditioning} of its finite-dimensional discretizations. 
Accordingly, condition numbers are considered only for the discrete setting in the subsequent Sections~\ref{sec:cond_ac_problem} and \ref{sec:cond_db_ac_problem}.

\subsection{Defining the two-sided Laplace transform}
\label{sec:defining_two_sided_laplace_transform}

Since the kernel \(\exp(-st)\) grows exponentially as \(t \to -\infty\) for \(s>0\), integrability of \(f\) on \(\mathbb R\) alone does not ensure that the two-sided Laplace transform is finite for \(s \in [0,\beta]\). 
We therefore introduce a weighted \(L^1\)-space that compensates for this growth.

\begin{definition}[Two-sided Laplace transform]
    \label{def:two_sided_laplace_transform}
    Let \(\beta > 0\) be fixed and the weight function 
    \begin{equation}
        \label{eq:weight_function}
        w : \mathbb R \longrightarrow (1,\infty), 
        \qquad
        t \longmapsto 1 +  \exp(-\beta t).
    \end{equation}
    The \emph{two-sided Laplace transform} on a compact interval \(s \in [0, \beta]\) is the linear operator
    \begin{equation}
        \mathcal L:
        L^1(\mathbb R, w (t) dt)
        \longrightarrow
        L^1([0,\beta], ds),
        \qquad
        f 
        \longmapsto 
        \mathcal L[f],
    \end{equation}
    defined by
    \begin{equation}
        \label{eq:two_sided_laplace_transform}
        \mathcal L[f](s)
        \coloneqq
        \int_{-\infty}^\infty \exp(-st) f(t) dt,
        \quad s \in [0,\beta].
    \end{equation}
\end{definition}

With this choice of weight, the two-sided Laplace transform defines a bounded linear operator.

\begin{proposition}
    \label{prop:laplace_is_bounded_linear}
    The two-sided Laplace transform \(\mathcal L\) from Definition~\ref{def:two_sided_laplace_transform} is a positive bounded linear operator for all \(\beta > 0\).
\end{proposition}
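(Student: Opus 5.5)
The plan is to prove the three properties of \(\mathcal L\) — linearity, positivity and boundedness — separately, with essentially all the work going into a single pointwise inequality for the kernel. Linearity is immediate from linearity of the integral, once we know the integral in \eqref{eq:two_sided_laplace_transform} converges absolutely for every \(f \in L^1(\mathbb R, w(t)dt)\) and every \(s\in[0,\beta]\). Positivity, meaning \(f \ge 0\) a.e.\ implies \(\mathcal L[f] \ge 0\), follows because the kernel \(\exp(-st)\) is strictly positive.

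The key step is the pointwise kernel bound
\begin{equation*}
    \exp(-st) \le 1 + \exp(-\beta t) = w(t), \qquad s\in[0,\beta],\ t\in\mathbb R .
\end{equation*}
I will prove it by splitting on the sign of \(t\).
\begin{itemize}
\item For \(t \ge 0\) and \(s \ge 0\) we have \(\exp(-st)\le 1\).
\item For \(t<0\), the map \(s\mapsto \exp(-st)\) is increasing, so \(\exp(-st)\le \exp(-\beta t)\).
\end{itemize}
A cleaner single argument is convexity of \(s\mapsto\exp(-st)\): on \([0,\beta]\) it is bounded by the larger of its endpoint values \(1\) and \(\exp(-\beta t)\), hence by their sum.

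From this bound we get \(|\mathcal L[f](s)| \le \int |f(t)|\, w(t)\, dt = \|f\|_{L^1(w)}\) for every \(s\in[0,\beta]\). This gives absolute convergence, so \(\mathcal L[f](s)\) is well defined. Measurability of \(s \mapsto \mathcal L[f](s)\) follows either from Fubini–Tonelli or from continuity in \(s\) via dominated convergence, using the dominating function \(|f|w\).

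Integrating over \([0,\beta]\) then yields
\begin{equation*}
    \|\mathcal L[f]\|_{L^1([0,\beta])} \le \beta\, \|f\|_{L^1(\mathbb R, w\,dt)} ,
\end{equation*}
so \(\mathcal L\) is bounded with \(\|\mathcal L\|\le\beta\), for every \(\beta>0\). Optionally, a sharper constant follows from Tonelli: \(\int_0^\beta e^{-st}\,ds = (1-e^{-\beta t})/t \le \beta\), but this is not needed.

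I expect no genuine obstacle. The only points needing care are the kernel bound for negative \(t\), which is exactly what the weight \(w\) is designed to handle, and the justification that \(\mathcal L[f]\) is measurable and lies in \(L^1\). Both are dealt with by dominated convergence together with the bound above.
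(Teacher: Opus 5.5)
Your proof is correct and matches the paper's argument: positivity and linearity are immediate, and the kernel bound \(\exp(-st)\le w(t)\) for \(s\in[0,\beta]\), \(t\in\mathbb R\), combined with Tonelli gives \(\|\mathcal L\|\le\beta\). One caveat concerns your optional aside: the inequality \((1-e^{-\beta t})/t\le\beta\) fails for \(t<0\), where the left side equals \((e^{\beta|t|}-1)/|t|\ge\beta\) (this blow-up is exactly why the weight is needed); the correct sharpening is \(\int_0^\beta e^{-st}\,ds = w(t)\tanh(\beta t/2)/t\le(\beta/2)\,w(t)\) for \(t\neq 0\), which yields \(\|\mathcal L\|\le\beta/2\).
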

\begin{proof}
    Let \(\beta > 0\). Positivity follows from the positivity of the kernel \(\exp(-st)\). If \(f \ge 0\) a.e., then \(\exp(-st) f(t) \ge 0\) a.e., and hence \(\mathcal L[f](s) \geq 0\) for all \(s \in [0,\beta]\). Linearity follows from the linearity of the integral. Moreover, since \(\exp(-st) \le w(t)\) for all \(s \in [0,\beta]\) and \(t \in \mathbb R\), Tonelli's theorem yields
    \begin{equation}
        \|\mathcal L[f]\|_{L^1(0,\beta)}
        \le
        \int_0^\beta
        \int_{-\infty}^{\infty}
        \exp(-st)|f(t)| dt ds
        \le
        \int_0^\beta
        \int_{-\infty}^{\infty}
        w(t)|f(t)| dt ds
        =
        \beta \|f\|_{L^1(\mathbb R, w(t) dt)}.
    \end{equation}
    Thus, \(\mathcal L\) is a bounded linear operator from
    \(L^1(\mathbb R, w(t) dt)\) to \(L^1(0, \beta)\) with operator norm \(\|\mathcal L\| \le \beta\).
\end{proof}

The weighted formulation therefore provides a well-defined and bounded forward operator. 
However, boundedness of the forward map does not imply stability of the inverse problem, as we discuss next.

\subsection{Ill-posedness of the inverse two-sided Laplace transform}
\label{sec:ill_posedness}

We next establish the instability of the inverse two-sided Laplace transform and recall its exponential ill-posedness.
Although \(\mathcal L\) is bounded and injective, with injectivity classically following from Lerch~\cite{Lerch1903}, the inverse fails to be well-posed in the sense of Hadamard~\cite{Hadamard1902}. 
Therefore, small perturbations in the data may lead to arbitrarily large perturbations in the reconstruction.

\begin{proposition}
    \label{prop:inv_laplace_transform_unbounded}
    The inverse of the two-sided Laplace transform \(\mathcal L\) from Definition~\ref{def:two_sided_laplace_transform} considered as an operator on its range
    \begin{equation}
        \mathcal L^{-1}:
        L^1((0,\beta), ds) \supset \operatorname{Ran}(\mathcal L) 
        \longrightarrow
        L^1(\mathbb R,w(t)\,dt)
    \end{equation}
    is linear but not bounded for all \(\beta > 0\). Equivalently,
    \(
        \|\mathcal L^{-1}\| = \infty.
    \)
\end{proposition}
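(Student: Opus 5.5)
Linearity of \(\mathcal L^{-1}\) on \(\operatorname{Ran}(\mathcal L)\) is inherited from the linearity of \(\mathcal L\) (Proposition~\ref{prop:laplace_is_bounded_linear}) together with injectivity (Lerch). If \(F_1=\mathcal L f_1\) and \(F_2=\mathcal L f_2\), then \(\mathcal L(af_1+bf_2)=aF_1+bF_2\), so \(\mathcal L^{-1}(aF_1+bF_2)=af_1+bf_2\). Unboundedness is the substantive part.

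To show \(\|\mathcal L^{-1}\|=\infty\), we construct a sequence \(f_n\in L^1(\mathbb R,w\,dt)\) with \(\|f_n\|_{L^1(\mathbb R,w\,dt)}=1\) and \(\|\mathcal L[f_n]\|_{L^1(0,\beta)}\to0\). The natural choice is an oscillating family, since the smooth positive kernel \(\exp(-st)\) averages out oscillations. Fix a bounded interval, say \(t\in[0,1]\), where \(w\) is bounded above and below by constants. Set
\begin{equation}
    f_n(t)=c_n\sin(nt)\,\mathbf 1_{[0,1]}(t),
\end{equation}
with \(c_n\) chosen to normalize the weighted norm. Since \(\int_0^1|\sin(nt)|\,dt\to 2/\pi\) and \(1\le w\le 2\) on \([0,1]\), the constants \(c_n\) stay bounded above and below.

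Next we estimate \(\mathcal L[f_n](s)=c_n\int_0^1 e^{-st}\sin(nt)\,dt\). This integral has an explicit closed form. Alternatively, one integration by parts gives
\begin{equation}
    \Bigl|\int_0^1 e^{-st}\sin(nt)\,dt\Bigr|\le \frac{1}{n}\Bigl(2+\int_0^1 s\,e^{-st}\,dt\Bigr)\le \frac{C(\beta)}{n},
\end{equation}
uniformly for \(s\in[0,\beta]\) (a Riemann--Lebesgue-type bound). Integrating over \(s\in[0,\beta]\) yields \(\|\mathcal L[f_n]\|_{L^1(0,\beta)}\le \beta\,C(\beta)\,c_n/n\to0\).

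Hence \(\|\mathcal L^{-1}(\mathcal L f_n)\|/\|\mathcal L f_n\|=1/\|\mathcal L f_n\|\to\infty\), and \(\mathcal L^{-1}\) cannot be bounded. Note that \(\mathcal L f_n\neq 0\) by injectivity, so the quotients are well defined.

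The main obstacle is only bookkeeping. We need the integration-by-parts bound to be uniform in \(s\in[0,\beta]\), which holds because \(e^{-st}\) and its \(t\)-derivative are bounded on \([0,1]\times[0,\beta]\). We also need the weighted normalization to be controlled, which is why the support is placed on a compact set where \(w\) is comparable to \(1\).

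As an alternative viewpoint, one can note that \(\mathcal L\) is compact and the spaces are infinite-dimensional, so a bounded inverse is impossible. Compactness follows by approximating the kernel on truncated domains. The explicit sequence above is more elementary, so I would present that one.
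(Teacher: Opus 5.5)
Your proposal is correct and follows essentially the same route as the paper. Both arguments normalize an oscillating family \(\sin(nt)\) times a compactly supported cutoff in \(L^1(\mathbb R,w\,dt)\), and use integration by parts to show \(\|\mathcal L[f_n]\|_{L^1(0,\beta)}=O(1/n)\). The differences are minor: you use the indicator \(\mathbf 1_{[0,1]}\) instead of a \(C_c^1\) cutoff \(\varphi\), which only adds boundary terms that are still \(O(1/n)\), and you write out the normalization and uniformity-in-\(s\) details that the paper leaves to ``standard estimates''.
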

\begin{proof}
    To prove this proposition, we present a single example which violates the continuity.
    Let \(\beta > 0\), \(\varphi\in C_c^1(\mathbb R) \setminus \{0\}\) be real-valued and define for sufficiently large \(n \in \mathbb N\)
    \begin{equation}
        f_n(t)
        \coloneqq 
        \frac{
            \sin(nt) \varphi(t)
        }
        {
            \|\sin(n\cdot)\varphi\|_{L^1(\mathbb R,w(t)\,dt)}
        }.
    \end{equation}
    By means of standard estimates and
    integration-by-parts, this sequence satisfies
    \begin{equation}
        \|f_n\|_{L^1(\mathbb R, w(t)\,dt)} = 1,
        \qquad
        \|\mathcal L[f_n]\|_{L^1(0, \beta)} \longrightarrow  0.
    \end{equation}
    Consequently, \(\|\mathcal L^{-1}\| = \infty\) and hence not bounded.
\end{proof}

Although Proposition \ref{prop:inv_laplace_transform_unbounded} gives a concrete example of the instability, the issue is more general.
In fact, for the classic one-sided Laplace transform on \(L^2(0, \infty)\), the Mellin transform yields a generalized singular system \((\sigma_r, u_r, v_r)\) \cite{Epstein2008, Kuhlman2013}, given by
\begin{equation}
    \sigma_r = \sqrt{\frac{\pi}{\cosh(\pi r)}},
    \qquad
    u_r(t) = \exp\left(-i\arg\Gamma(\nicefrac12 + ir)\right) t^{-\nicefrac12 + ir},
    \qquad
    v_r(s) = s^{-\frac12-ir},
    \qquad r \in \mathbb R.
\end{equation}
Since
\begin{equation}
    \sigma_r
    \sim
    \sqrt{2\pi}\,\exp\left(-\frac{\pi}{2}|r|\right)
    \qquad 
    \text{as } |r| \to \infty,
\end{equation}
the singular values \(\sigma_r\) decay exponentially, making the inversion \emph{exponentially ill-posed}.

\subsection{Forward and backward amplification of the two-sided Laplace transform}
\label{sec:forward_backward_amp}

Here, we consider how a Dirac delta perturbation in the \(t\)-variable is amplified through the forward and backward Laplace transform, determined by means of Rayleigh quotients. We use the Dirac delta measure because it isolates the pointwise behavior of the kernel \(\exp(-st)\).
We show that without the weight function \(w(t)\) from \eqref{eq:weight_function} the amplification of a frequency component is exponential, but including the weight function leads to an amplification that is only linear.

Let \(w(t)\) be given as in Definition~\ref{def:two_sided_laplace_transform}. 
We denote by \(M_w^+(\mathbb R)\) the space of nonnegative measures with finite weighted mass
\begin{equation}
    M_w^+(\mathbb R) 
    \coloneqq 
    \{\nu \in M (\mathbb R) \ : \|\nu\|_{M_w^+(\mathbb R)} < \infty\}, 
    \qquad
    \|\nu\|_{M_w^+(\mathbb R)} 
    \coloneqq 
    \int_{\mathbb R} w(t) d \nu(t).
\end{equation}
We extend the Laplace transform to \(\nu \in \mathcal M_w^+(\mathbb R)\) in the natural way by
\begin{equation}
    \mathcal L[\nu](s)
    \coloneqq
    \int_{\mathbb R} \exp(-st)\,d\nu(t).
\end{equation}
Let \(\mathcal F \subset M_w^+(\mathbb R)\) be a family of non-zero measures.
In analogy with the condition number, we capture the forward-backward amplification on \(\mathcal F\) by taking the product of the largest forward amplification and the largest backward amplification on the image \(\mathcal L(\mathcal F)\)
\begin{equation}
    \label{eq:forward_backward_amp}
    a_w(\mathcal F)
    \coloneqq
    \sup_{\nu \in \mathcal F \setminus \{0\}}
    \frac{
        \|\mathcal L[\nu]\|_{L^1(0,\beta)}
    }{
        \|\nu\|_{M_w^+(\mathbb R)}
    }
    \ 
    \sup_{\gamma \in \mathcal L(\mathcal F) \setminus \{0\}}
    \frac{
        \|\mathcal L^{-1} [\gamma]\|_{M_w^+(\mathbb R)}
    }{
        \|\gamma\|_{L^1(0,\beta)}
    }.
\end{equation}
Defining for each \(\nu \in \mathcal F\) the Rayleigh quotient
\begin{equation}
    R_w(\nu)
    \coloneqq
    \frac{
        \|\mathcal L[\nu]\|_{L^1(0,\beta)}
    }{
        \|\nu\|_{M_w^+(\mathbb R)}
    },
\end{equation}
we can write
\begin{equation}
    a_w(\mathcal F)
    =
    \sup_{\nu \in \mathcal F \setminus \{0\}} R_w(\nu)
    \
    \sup_{\nu \in \mathcal F \setminus \{0\}} \frac{1}{R_w(\nu)}
    =
    \frac{
    \sup_{\nu \in \mathcal F \setminus \{0\}} R_w(\nu)
    }{
    \inf_{\nu \in \mathcal F \setminus \{0\}} R_w(\nu)
    }.
\end{equation}
Thus, \(a_w(\mathcal F)\) has two equivalent interpretations. It measures the maximal relative variation of the forward amplification on \(\mathcal F\), and it measures the forward-backward amplification on \(\mathcal F\).

We now use this quantity to examine the effect of the weight function.
\begin{proposition}
    \label{prop:forward_backward_amp}
    Let \(\mathcal L: L^1(\mathbb R, wdt) \longrightarrow L^1([0,\beta],ds)\) denote the weighted two-sided Laplace transform. 
    Let \(\mu>0\) and define the family of Dirac delta measures
    \begin{equation}
        \mathcal F
        \coloneqq
        \{\delta_t \ : \ t\in[-\mu,\mu]\}.
    \end{equation}
    Then, the forward-backward amplification of \(\mathcal L\) on \(\mathcal F\) is
    \begin{equation}
        \label{eq:amplification_weighted}
        a_w({\mathcal F})
        =
        \frac{\nicefrac{\beta\mu}{2}}{\tanh\left(\nicefrac{\beta\mu}{2}\right)}.
    \end{equation}
\end{proposition}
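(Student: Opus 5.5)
The plan is to compute the Rayleigh quotient \(R_w(\delta_t)\) in closed form and then use the identity \(a_w(\mathcal F)=\sup R_w/\inf R_w\) derived just before the proposition. Two preliminary remarks justify that identity on this family. First, \(\mathcal L\) is injective on measures (Lerch-type uniqueness, or directly because the functions \(s\mapsto \exp(-st)\) are distinct for distinct \(t\)). Hence \(\mathcal L^{-1}[\mathcal L[\delta_t]]=\delta_t\) for every \(\gamma=\mathcal L[\delta_t]\in\mathcal L(\mathcal F)\), and the backward supremum is indeed \(\sup 1/R_w\). Second, the weighted mass of a Dirac measure is \(\|\delta_t\|_{M_w^+(\mathbb R)}=w(t)=1+\exp(-\beta t)\).

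Next I compute the forward norm. Since \(\exp(-st)>0\), for \(t\neq0\)
\begin{equation*}
    \|\mathcal L[\delta_t]\|_{L^1(0,\beta)}=\int_0^\beta \exp(-st)\,ds=\frac{1-\exp(-\beta t)}{t},
\end{equation*}
and the value at \(t=0\) is \(\beta\). Dividing by \(w(t)\) gives
\begin{equation*}
    R_w(\delta_t)=\frac{1-\exp(-\beta t)}{t\,(1+\exp(-\beta t))}=\frac{\tanh(\beta t/2)}{t}=\frac{\beta}{2}\,g\!\left(\frac{\beta t}{2}\right),
    \qquad g(u)\coloneqq\frac{\tanh u}{u},
\end{equation*}
with \(g(0)=1\) by continuity. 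This is where the weight does its job: the exponential growth of the numerator for \(t<0\) is cancelled exactly by \(w(t)\).

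The only genuinely analytic step is showing that \(g\) is even and strictly decreasing on \([0,\infty)\). Evenness is immediate because \(\tanh\) is odd. For monotonicity,
\begin{equation*}
    g'(u)=\frac{u\,\operatorname{sech}^2 u-\tanh u}{u^2}.
\end{equation*}
The numerator has the sign of \(u-\sinh u\cosh u=u-\tfrac12\sinh(2u)\), which is negative for \(u>0\) because \(\sinh x>x\) for \(x>0\).

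It follows that \(R_w(\delta_t)\) attains its maximum \(\beta/2\) at \(t=0\) and its minimum \(\tanh(\beta\mu/2)/\mu\) at \(|t|=\mu\) over \(t\in[-\mu,\mu]\). Taking the ratio gives
\begin{equation*}
    a_w(\mathcal F)=\frac{\beta/2}{\tanh(\beta\mu/2)/\mu}=\frac{\beta\mu/2}{\tanh(\beta\mu/2)},
\end{equation*}
which is \eqref{eq:amplification_weighted}.
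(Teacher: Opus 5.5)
Your proof is correct and follows the same route as the paper: compute $R_w(\delta_t)=\tanh(\beta t/2)/t$, with value $\beta/2$ at $t=0$, and take the ratio of the maximum at $t=0$ to the minimum at $|t|=\mu$. Your monotonicity argument for $g(u)=\tanh(u)/u$ (via $\tfrac12\sinh(2u)>u$) and your injectivity remark fill in steps the paper only asserts, namely where the extrema are attained and that $\mathcal L^{-1}$ is well defined on $\mathcal L(\mathcal F)$.
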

\begin{proof}
    Using the definition of \(w(t)\), the Rayleigh quotient of \(\delta_t\) becomes
    \begin{equation}
        R_w(\delta_t)
        =
        \frac{1}{1+\exp(-\beta t)}
        \int_0^\beta \exp(-st)ds.
    \end{equation}
    For \(t \neq 0\), we have 
    \begin{equation}
        R_w(\delta_t)
        =
        \frac{1}{1 + \exp(-\beta t)}\frac{1-\exp(-\beta t)}{t} = \frac{1}{t} \tanh\left(\frac{\beta t}{2}\right).
    \end{equation}
    Moreover, by continuity
    \begin{equation}
        R_w(\delta_0)
        =
        \lim_{t \to 0} \frac{1}{t} \tanh\left(\frac{\beta t}{2}\right) = \frac{\beta}{2}
    \end{equation}
    The Rayleigh quotient \(R_w(\delta_t)\) attains its maximum at \(t=0\) and minimum at \(|t|=\mu\)
    \begin{equation}
        \sup_{t \in [-\mu,\mu]} R_w(\delta_t) 
        = 
        \frac{\beta}{2},
        \qquad
        \inf_{t \in [-\mu,\mu]} R_w(\delta_t) 
        = 
        \frac{1}{\mu} \tanh\left(\frac{\beta\mu}{2}\right),
    \end{equation}
    delivering the assertion together with \eqref{eq:forward_backward_amp}.
\end{proof}

This proposition shows that the weight \(w\) removes the exponential growth that would otherwise appear in the forward-backward amplification. Indeed, without the weight, the Rayleigh quotient of the same Dirac family is given by
\begin{equation}
    R_1(\delta_t) 
    = 
    \begin{cases}
        \dfrac{1-\exp(-\beta t)}{t}, & t \neq 0, \\
         \beta , & t=0,
    \end{cases}
\end{equation}
yielding
\begin{equation}\label{eq:amp_unweighted}
    a_1(\mathcal F) = \exp(\beta \mu).
\end{equation}
Thus, the unweighted problem yields an exponential forward-backward amplification in \(\beta \mu\). 
Comparing \eqref{eq:amp_unweighted} to the weighted amplification \eqref{eq:amplification_weighted}, which is bounded linearly in \(\beta \mu\) by
\begin{equation}
    \label{eq:amp_weighted}
    a_w(\mathcal F) \le 1 +\frac{\beta \mu}{2} \, ,
\end{equation}
indicates that the weight function greatly reduces the amplification of the transformation.

A similar linear bound appears when, instead of a weight function, the solution is assumed to satisfy detailed-balance. 
Here, one builds the exponential compensation of the weight directly into the model class, as for example in \cite{Robles2025}. 
For the Dirac family, this replaces the \(\delta_t\) in \(\mathcal F\) by its detailed-balance counterpart
\begin{equation}
    \mathcal F_{\mathrm{db}} 
    \coloneqq 
    \{\delta_t + \exp(-\beta t) \delta_{-t} 
    \ : \ 
    t \in [0,\mu]\}.
\end{equation}
The factor \(\exp(-\beta t)\), which otherwise appeared in the weight, is now included into the model itself. 
The forward-backward amplification on \(\mathcal F_{\textbf{db}}\) again grows at most linearly in \(\beta \mu\)
\begin{equation}
    a_1(\mathcal F_{\mathrm{db}}) = a_w(\mathcal F) \le 1 + \frac{\beta \mu}{2}.
\end{equation}
Thus, the impact of the weight function is comparable to the impact of imposing detailed-balance. 
In the subsequent section \ref{sec:disc_db_ac_problem} on the discrete problem, we will only consider the case where the solution is assumed to satisfy detailed-balance.

Comparing the analysis above to that of Epstein \textit{et al}.~\cite{Epstein2008}, Epstein \emph{et al.} already identified a closely related exponential amplification for the one-sided Laplace transform. 
In their filtration analysis, a characteristic-function filter supported on a positive interval produces an exponential factor in the reconstructed signal, whereas shifting the support to a negative interval removes this exponential growth and leads instead to linear attenuation. 
Our analysis complements this observation for the two-sided Laplace transform, where positive and negative \(t\)-values are simultaneously present, and the kernel \(\exp(-st)\) grows exponentially on the negative half-axis. 
The weight function \(w(t)\) compensates for this asymmetry, reducing the forward-backward amplification of the Dirac family \(\mathcal F\) from exponential growth in \(\beta\mu\), to at most linear growth in \(\beta\mu\). Since the weighted amplification still depends on \(\beta\mu\), enlarging the reconstruction domain can nevertheless worsen the inversion, as observed in practice~\cite{chuna_JCP_2025}.

\section{Conditioning of the discrete general AC problem}
\label{sec:cond_ac_problem}

Within this section, we discretize the continuous AC problem in Section~\ref{sec:disc_ac_problem}, delivering a Vandermonde-type linear system. 
Then, in Section~\ref{sec:cond_bounds_ac_problem}, we derive analytical upper and lower bounds for its condition number.
The lower bound establishes super-exponential growth, whereas the upper bound shows that a better conditioning can be achieved when only a small number of reconstruction points is used, see~\eqref{eq:lowdim_upperbound}.
Finally, in Section~\ref{sec:val_cond_bounds_ac_problem}, we numerically validate the derived bounds.

Our conditioning analysis builds on classic results for Vandermonde and Vandermonde-like matrices, in particular norm estimates for inverses and node-separation bounds developed by \emph{Gautschi and others}~\cite{Gautschi1962, Gautschi1975, Gautschi1988, Fasino1992}. 
Since the regression matrix resulting from the discretization of the general AC problem has a specific node distribution together with a diagonal prefactor, we develop estimates tailored to this structure.

\subsection{Discretizing the continuous general AC problem}
\label{sec:disc_ac_problem}
Recalling the physical setting from Section~\ref{subsec:the_ac_problem}, the problem considered here is the reconstruction of the \emph{dynamic structure factor} (DSF) \(f=f(t)\) from the \emph{imaginary-time correlation function} (ITCF) \(F=F(s)\).
This constitutes the \emph{AC problem}, as discussed in Section~\ref{subsec:the_ac_problem}. 
We formalize its \emph{discrete version} below.

Let 
\(
    f \in L^1(\mathbb R, w dt)
\) 
and denote its two-sided Laplace transform by
\(
    F(s) \coloneqq  \mathcal L[f](s),
\)
with
\begin{equation}
    \mathcal L:L^1(\mathbb R, w dt) \longrightarrow L^1([0,\beta], ds).
\end{equation}
The \emph{discrete AC problem} is to reconstruct \(f = f(t)\) from finitely many samples of \(F = F(s)\). 
In particular, we assume that \(F\) is sampled on \(n_s + 1 \in \mathbb N\) equidistant nodes
\begin{equation}
    s_i = i h_s, 
    \qquad 
    i = 0, \ldots, n_s,
    \qquad
    h_s \coloneqq \frac{\beta}{n_s},
\end{equation}
and that the available data is
\begin{equation}
    y_i = F(s_i), 
    \qquad 
    i = 0, \ldots, n_s.
\end{equation}

To obtain a finite-dimensional problem, we restrict the \(t\)-variable to a finite interval \([-\mu, \mu]\) with \(\mu > 0\) and discretize it by \(n_t + 1 \in \mathbb N\) equidistant nodes 
\begin{equation}
    t_j = -\mu + j h_t, 
    \qquad
    h_t = \frac{2\mu}{n_t},
    \qquad
    j=0, \ldots, n_t.
\end{equation}
The integral defining the two-sided Laplace transform~\eqref{eq:two_sided_laplace_transform} is then approximated by a quadrature rule in the \(t\)-variable, yielding
\begin{equation}
    F(s_i) 
    \approx
    \sum_{j=0}^{n_t} w_j \exp(-s_i t_j) f(t_j),
\end{equation}
where \(w_j\) denotes the quadrature weight associated with the node \(t_j\). 
Defining a coefficient vector \(\bm \alpha \in \mathbb R^{n_t+1}\) with
\begin{equation}
    \alpha_j \coloneqq w_j f(t_j),
\end{equation} 
we obtain the discrete linear system
\begin{equation}
    \label{eq:general_AC_reg_mat}
    \bm R \bm \alpha = \bm y, 
    \qquad
    R_{i,j} = \exp(-s_i t_j), 
    \qquad
    \bm R \in \mathbb R^{(n_s + 1)\times(n_t + 1)}.
\end{equation}
Here, \(\bm R\) can be rewritten as 
\begin{equation}
    R_{i,j} = \exp(-s_i(-\mu + j h_t)) = \exp(s_i \mu) \exp(-s_i h_t)^j.
\end{equation}
Introducing the nodes 
\begin{equation}
    x_i \coloneqq \exp(-s_ih_t),
\end{equation}
we obtain
\begin{equation}
    R_{i,j} = \exp(s_i \mu) x_i^j.
\end{equation}
Consequently, the regression matrix admits a factorization into a diagonal matrix and a Vandermonde matrix using the nodes \(x_i\), represented in the monomial basis \(\{x^0, x^1, \ldots, x^{n_t}\}\).

\subsection{Condition number bounds for the discrete general AC problem}
\label{sec:cond_bounds_ac_problem}

We provide explicit lower and upper bounds for the conditioning of the regression matrix \(\bm R\) defined in \eqref{eq:general_AC_reg_mat} in terms of the number of samples \(n_s\), the number of reconstruction points \(n_t\) and the product \(\beta \mu\).

\begin{restatable}{theorem}{generalACbounds}
    \label{theo:general_AC_bounds}
    Let \(\beta, \mu > 0\) and \(n_s, n_t \in \mathbb N\), \(n_s \ge n_t\) be given, and let the regression matrix \(\bm R \in \mathbb R^{(n_s+1) \times (n_t+1)}\) given by
    \begin{equation}
        R_{i,j} = \exp(-s_i t_j),
        \qquad
        s_i = h_s i,
        \qquad
        t_j = -\mu + h_t j,
    \end{equation}
    with \(h_s = \beta / n_s\), \(h_t = 2\mu / n_t\), for \(i = 0, \ldots, n_s\) and \(j = 0, \ldots, n_t\), and set
    \begin{equation}
        q \coloneqq \left\lfloor 
        \frac{n_s + 1}{n_t + 1}
        \right\rfloor.
    \end{equation}
    Then, its condition number satisfies the bounds
    \begin{equation}
        \label{eq:general_AC_bounds}
        \frac{1}{\sqrt{2} n_t^{1/4}} \left(\frac{n_t}{\beta\mu}\right)^{n_t}
        \le
        \kappa(\bm R)
        \le
        \sqrt{\frac{(n_t + 1)(n_s + 1)}{q}} \exp(3\beta\mu)
        \left(
            \frac{2en_s}{\beta \mu q}
        \right)^{n_t}.
    \end{equation}
\end{restatable}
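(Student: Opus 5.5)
The plan is to work with the factorization from Section~\ref{sec:disc_ac_problem}, \(\bm R = \bm D\bm V\), where \(\bm D=\operatorname{diag}(\exp(s_i\mu))\) and \(\bm V_{i,j}=x_i^j\) with nodes \(x_i=\exp(-s_ih_t)\in[\exp(-\beta h_t),1]\). I would bound \(\kappa(\bm R)=\|\bm R\|\,\|\bm R^+\|\) by treating the two factors separately: a test-vector argument for the lower bound, and a square-subsystem argument combined with Gautschi's inverse-Vandermonde estimate for the upper bound.

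\emph{Lower bound.} I will use \(\kappa(\bm R)\ge \|\bm R\bm e_0\|/\|\bm R\bm c\|\cdot\|\bm c\|\), valid for any \(\bm c\neq 0\), with the alternating binomial vector \(c_j=(-1)^j\binom{n_t}{j}\).
\begin{itemize}
\item By the binomial theorem, \((\bm R\bm c)_i=\exp(s_i\mu)(1-x_i)^{n_t}\). The column \(\bm R\bm e_0\) has entries \(\exp(s_i\mu)\).
\item The two vectors therefore differ entrywise by the factor \((1-x_i)^{n_t}\). Since \(1-x_i\le s_ih_t\le\beta h_t=2\beta\mu/n_t\), this gives \(\|\bm R\bm c\|\le (2\beta\mu/n_t)^{n_t}\|\bm R\bm e_0\|\).
\item We have \(\|\bm c\|^2=\binom{2n_t}{n_t}\ge 4^{n_t}/(2\sqrt{n_t})\), so \(\|\bm c\|\ge 2^{n_t}/(\sqrt2\, n_t^{1/4})\).
\end{itemize}
Combining these, the factors \(2^{n_t}\) cancel and leave exactly \(\frac{1}{\sqrt2 n_t^{1/4}}(n_t/(\beta\mu))^{n_t}\).

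\emph{Upper bound.} First, \(\|\bm R\|\le\|\bm R\|_F\le\sqrt{(n_s+1)(n_t+1)}\,\exp(\beta\mu)\), since every entry is at most \(\exp(\beta\mu)\).

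For \(\sigma_{\min}(\bm R)\), I will choose \(q\) pairwise disjoint row index sets \(S_r=\{r+kq:k=0,\dots,n_t\}\) for \(r=0,\dots,q-1\). These fit because \(q(n_t+1)\le n_s+1\). Then
\[
\|\bm R\bm\alpha\|^2\ge\sum_r\|\bm R_{S_r}\bm\alpha\|^2 ,
\]
which gives \(\sigma_{\min}(\bm R)\ge\sqrt q\,\min_r\|\bm R_{S_r}^{-1}\|^{-1}\). Each square block factors as \(\bm D_{S_r}\bm V_{S_r}\). Since \(\bm D\ge 1\) entrywise, \(\|\bm D_{S_r}^{-1}\|\le 1\), so it remains to bound \(\|\bm V_{S_r}^{-1}\|_2\le\sqrt{n_t+1}\,\|\bm V_{S_r}^{-1}\|_\infty\).

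For this I will invoke Gautschi's bound
\[
\|\bm V^{-1}\|_\infty\le\max_k\prod_{l\ne k}\frac{1+|x_l|}{|x_k-x_l|}\le\max_k\prod_{l\ne k}\frac{2}{|x_k-x_l|}.
\]
The key input is a node-separation estimate. By the mean value theorem, with derivative of \(\exp(-sh_t)\) at least \(h_t\exp(-\beta h_t)\) in absolute value, nodes in \(S_r\) with indices \(k,l\) satisfy
\[
|x_k-x_l|\ge q|k-l|\,h_sh_t\exp(-\beta h_t),\qquad h_sh_t=\frac{2\beta\mu}{n_sn_t}.
\]
The product then becomes
\[
\Bigl(\frac{2\exp(\beta h_t)}{qh_sh_t}\Bigr)^{n_t}\frac{1}{k!\,(n_t-k)!}\le\Bigl(\frac{n_sn_t\exp(\beta h_t)}{\beta\mu q}\Bigr)^{n_t}\frac{2^{n_t}}{n_t!}.
\]
Using \(n_t^{n_t}/n_t!\le e^{n_t}\) and \(\exp(\beta h_tn_t)=\exp(2\beta\mu)\), this is at most \(\exp(2\beta\mu)\bigl(2en_s/(\beta\mu q)\bigr)^{n_t}\). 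Multiplying by the \(\|\bm R\|\) bound yields the stated upper bound, with total exponential factor \(\exp(3\beta\mu)\) and prefactor \(\sqrt{(n_s+1)(n_t+1)/q}\).

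The main obstacle is the separation step. The nodes \(x_i\) cluster near \(1\) with gaps of order \(h_sh_t\), so the constants must be tracked carefully: the factor \(\exp(-\beta h_t)\) from the mean value theorem must combine with the \(1/(k!(n_t-k)!)\) structure to give exactly \(\exp(2\beta\mu)\) and the base \(2en_s/(\beta\mu q)\), without losing additional powers of \(n_t\).
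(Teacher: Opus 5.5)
Your strategy is the same as the paper's proof. For the lower bound you test the binomial vector \((1-x)^{n_t}\) against \(\bm e_0\). For the upper bound you use the Frobenius estimate of \(\|\bm R\|\), the \(q\) interleaved square row blocks, discard the diagonal factor \(\exp(s_i\mu)\ge 1\), apply a Gautschi-type inverse estimate, and use the separation \(|x_k-x_l|\ge qh_sh_t\exp(-\beta h_t)|k-l|\).

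The lower bound is correct as written. The upper bound does not close with the constant you claim. You pass to the spectral norm via \(\|\bm V_{S_r}^{-1}\|_2\le\sqrt{n_t+1}\,\|\bm V_{S_r}^{-1}\|_\infty\). You then bound the maximum over \(k\) of \(1/(k!(n_t-k)!)\) crudely by \(2^{n_t}/n_t!\). In the final step the factor \(\sqrt{n_t+1}\) simply disappears. Carried through, your chain gives
\[
\kappa(\bm R)\le (n_t+1)\sqrt{\frac{n_s+1}{q}}\,\exp(3\beta\mu)\Bigl(\frac{2en_s}{\beta\mu q}\Bigr)^{n_t},
\]
which is weaker than the statement by \(\sqrt{n_t+1}\). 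This is exactly the ``additional power of \(n_t\)'' you warned about.

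The repair is easy either way:
\begin{itemize}
\item The paper avoids the factor by bounding \(\|\bm V^{-1}\|_2\le\|\bm V^{-1}\|_F\le\sum_{i,j}|(\bm V^{-1})_{j,i}|\) (Proposition~\ref{prop:cheb_vander_inverse_estimate}). That is, it uses the \emph{sum} over \(k\), not the maximum, of the Lagrange coefficient sums \(\prod_{l\ne k}(1+x_l)/|x_k-x_l|\). Then \(\sum_{k=0}^{n_t}1/(k!(n_t-k)!)=2^{n_t}/n_t!\) exactly, and no \(\sqrt{n_t+1}\) arises.
\item Alternatively, keep the maximum and use the sharper central binomial estimate \(\max_k {n_t\choose k}\le 2^{n_t}/\sqrt{n_t+1}\), which absorbs the factor.
\end{itemize}

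A minor point: with \(V_{i,j}=x_i^j\), the columns of \(\bm V^{-1}\) are the coefficient vectors of the Lagrange polynomials. So Gautschi's product bound controls the maximal column sum \(\|\bm V^{-1}\|_1\), not \(\|\bm V^{-1}\|_\infty\). Since \(\|\bm A\|_2\le\sqrt{n_t+1}\,\|\bm A\|_1\) holds as well, this changes nothing else in your argument.
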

\begin{proof}
    The proof is deferred to Appendix~\ref{app:proof_general_AC_problem}.
\end{proof}

The lower bound is independent of the number of samples \(n_s\). 
Consequently, for fixed \(\beta \mu \), this establishes that the discrete AC problem is super-exponentially ill-conditioned as \(n_t \to \infty\).
In this sense, discretization aggravates the instability, since the inverse two-sided Laplace transform itself is only exponentially ill-posed.
In particular, oversampling in the \(s\)-variable cannot remove this super-exponential growth.
The dependence of the upper bound on \(n_s\) can be isolated through the oversampling factor \(q\). 
Indeed, it holds
\begin{equation} 
    \label{eq:remove_ns_dependence}
    \frac{n_s+1}{q} 
    \le 
    \left(1 + \frac{1}{q}\right) 
    (n_t + 1),
\end{equation}
and therefore
\begin{equation}
    \label{eq:general_AC_upper_bound}
    \kappa(\bm R) 
    \le 
    \sqrt{2} (n_t+1)
    \exp(3\beta\mu) 
    \left(1 + \frac{1}{q}\right)^{n_t+1/2} 
    \left( \frac{2e(n_t+1)}{\beta\mu}\right)^{n_t}.
\end{equation}
Oversampling may improve the actual numerical conditioning, but it does not prevent the super-exponential dependence on the number of reconstruction points \(n_t\).

However, when \(n_t\) stays in the low-dimensional range
\begin{equation}
    1 \le n_t + 1 \le \frac{\beta\mu}{2e},
\end{equation}
we obtain the simpler expression without super-exponential growth in \(n_t\)
\begin{equation}
    \label{eq:lowdim_upperbound}
    \kappa(\bm R)
    \le
    \sqrt{2}
    \left(n_t + 1\right) \exp(3\beta\mu)
    \left(1 + \frac{1}{q}\right)^{n_t+1/2}
\end{equation}
Hence, despite the severe ill-conditioning, the discrete AC problem may remain numerically manageable in this low-dimensional regime. 
This supports the efforts of Chuna et al.~\cite{chuna_arXiv_2026}, who formulate the AC problem as a dictionary learning problem in search of a low-dimensional representation in \(t\). 
But, even in this low-dimensional regime, the upper bound still depends exponentially on the product \(\beta\mu\). 

\subsection{Numerical validation of the condition number bounds for the discrete general AC problem}
\label{sec:val_cond_bounds_ac_problem}

Here, we validate the derived bounds from the previous section by comparing them with the numerically computed condition numbers of \(\bm R\).
In Figure~\ref{fig:general_conditioning}, we compare the numerically computed conditioning of \(\bm R\) from~\eqref{eq:general_AC_reg_mat}, with the analytical bounds derived in Theorem~\ref{theo:general_AC_bounds}.
For the parameter range shown, the numerical condition numbers are consistent with the derived bounds.
The lower bound correctly reflects the rapid growth of the conditioning for each fixed \(\beta\), but does not reproduce the trend as \(\beta\) increases.
In contrast, the upper bound follows the behavior of the condition number more closely, both for fixed \(\beta\) and across increasing values of \(\beta\).
Although neither bound is tight, the analytical estimates make the dependence on \(n_s, n_t\) and \(\beta\mu\) explicit.

\input{figures/general-conditioning}

In the next section, we impose detailed-balance as a \emph{physical constraint} and investigate to what extent it improves the conditioning of the discrete AC problem.

\section{Conditioning of the discrete AC problem with detailed-balance enforced on the solution}
\label{sec:cond_db_ac_problem}

In the previous section, we established that the discretized \emph{general AC problem} is super-exponentially ill conditioned, although its conditioning may remain manageable in a low-dimensional regime. 
In this section, we investigate how imposing detailed-balance as a \emph{physical constraint}, as employed, for example, by Robles et al.~\cite{Robles2025}, affects this conditioning.

Thus, we begin in Section~\ref{sec:disc_db_ac_problem} by incorporating the detailed-balance as a physical constraint into the discretization of the AC problem, again obtaining a Vandermonde-type linear system.
Following that, in Section~\ref{sec:cond_bounds_db_ac_problem}, we derive lower and upper bounds, \eqref{eq:dbVandermondeBound} and \eqref{eq:dbVandermondeUpperBound} respectively, for the condition number. 
While the detailed-balanced problem remains super-exponentially ill-conditioned in general, its conditioning is substantially improved compared with the general AC discretization in the low-dimensional regime, see~\eqref{eq:db_lowdim_upperbound}.
Finally, in Section~\ref{sec:val_cond_bounds_db_ac_problem}, we numerically validate the derived bounds.

\subsection{Discretizing the continuous detailed-balanced AC problem}
\label{sec:disc_db_ac_problem}
We now consider the discrete AC problem under the additional assumption that the DSF satisfies the detailed-balance symmetry. 
This symmetry stems from the Kubo-Martin-Schwinger condition for equilibrium correlation functions at inverse temperature \(\beta\)~\cite{FetterWalecka_QuantumManyBody_1971}.
The key structural observation is that imposing this symmetry relation transforms the discretized problem into polynomial regression in the Chebyshev basis.

Specifically, we assume that  \(f \in L^1(\mathbb R, dt)\) satisfies
\begin{equation}
    f(-t) = \exp(-\beta t) f(t),
    \qquad
    t \ge 0.
\end{equation}
For the two-sided Laplace transform \(\mathcal L : L^1(\mathbb R, w dt) \longrightarrow L^1([0,\beta], ds)\), the corresponding imaginary-time correlation function \(F(s) \coloneqq \mathcal L[f](s)\) then satisfies the symmetry~\cite{Dornheim_POP_2023_2}
\begin{equation}
    F(s) = F(\beta - s), 
    \qquad
    s \in [0, \beta].
\end{equation}
Thus, it is sufficient to sample \(F\) on the half interval \([0,\beta/2]\). Accordingly, let \(n_s \in \mathbb N\), and define the equidistant imaginary-time nodes
\begin{equation}
    s_i = i h_s, 
    \qquad
    h_s \coloneqq \frac{\beta}{2 n_s},
    \qquad
    i = 0, \ldots, n_s.
\end{equation}
The available data is given by 
\begin{equation}
    y_i = F(s_i),
    \qquad
    i = 0, \ldots, n_s.
\end{equation}
To obtain a finite-dimensional problem, we restrict the positive \(t\)-variable to a finite interval \([0,\mu]\) with \(\mu > 0\) and discretize it by \(n_t + 1 \in \mathbb N\) equidistant nodes
\begin{equation}
    t_j = j h_t,
    \qquad
    h_t \coloneqq \frac{\mu}{n_t},
    \qquad
    j = 0, \ldots, n_t.
\end{equation}
Using the detailed-balance, the two-sided Laplace transform can be written as an integral over the positive half-axis only. For \(s \in [0, \beta]\), we thus obtain
\begin{equation}
    F(s) 
    = 
    \int_{-\infty}^{\infty} \exp(-st) f(t) dt
    = 
    \int_0^\infty (\exp(-st) + \exp(-(\beta-s)t)) f(t) dt
\end{equation}
Approximating this integral by a quadrature rule on \([0, \mu]\) yields
\begin{equation}
    F(s_i) \approx \sum_{j=0}^{n_t} w_j (\exp(-s_i t_j) + \exp(-(\beta-s_i)t_j)) f(t_j),
\end{equation}
where \(w_j\) denotes the quadrature weight associated with the nodes \(t_j\). Defining the coefficient vector \(\bm \alpha \in \mathbb R^{n_t+1}\) by
\begin{equation}
    \alpha_j \coloneqq w_j f(t_j),
    \qquad
    j= 0, \ldots, n_t,
\end{equation}
we obtain the discrete linear system
\begin{equation}
    \label{eq:db_AC_reg_mat}
    \bm R^{\mathrm{db}} \bm \alpha = \bm y,
    \qquad
    R^{db}_{i,j} = \exp(-s_i t_j) + \exp(-(\beta-s_i)t_j),
    \qquad
    \bm R^{\mathrm{db}} \in \mathbb R^{(n_s+1)\times (n_t+1)}.
\end{equation}
The entries of the detailed-balanced regression matrix can be rewritten as
\begin{equation}
    R_{i,j}^{\mathrm{db}} 
    = 
    \exp\left(
        -\frac{\beta}{2} \, t_j
    \right)
    \left(
        \exp\left(
            \left(\frac{\beta}{2}-s_i\right) 
            t_j
        \right)
        + 
        \exp\left(
            -\left(\frac{\beta}{2} - s_i\right)
            t_j
        \right)
    \right),
\end{equation}
which yields the hyperbolic representation
\begin{equation}
    R_{i,j}^{\mathrm{db}} 
    = 
    2 \exp \left(
        -\frac{\beta}{2}\,t_j
    \right) 
    \cosh \left(
        \left(\frac{\beta}{2}-s_i\right) t_j
    \right).
\end{equation}
Since \(t_j = j h_t\), the identity
\begin{equation}
    \cosh(jx) = T_j(\cosh x),
\end{equation}
where \(T_j\) denotes the \(j\)th Chebyshev polynomial of the first kind, gives
\begin{equation}
    R_{i,j}^{\mathrm{db}} 
    = 
    2\exp\left(-\frac{\beta}{2} \, t_j\right) T_j\left(\cosh\left(\left(\frac{\beta}{2} \, - s_i\right) h_t\right)\right).
\end{equation}
Introducing the nodes
\begin{equation}
    x_i^\mathrm{db} 
    \coloneqq
    \cosh\left(\left(\frac{\beta}{2} - s_i\right) h_t\right),
\end{equation}
\(\bm R^\mathrm{db}\) can be rewritten as 
\begin{equation}
    R_{i,j}^\mathrm{db} 
    = 
    2 \exp\left(-\frac{\beta}{2} \, t_j\right) T_j(x_i^\mathrm{db}).
\end{equation}
Consequently, the regression matrix admits a factorization into a diagonal matrix and a Vandermonde-type matrix using the nodes \(x_i^\mathrm{db}\), represented in the Chebyshev basis \(\{T_0, T_1, \ldots, T_{n_t}\}\).

\subsection{Condition number bounds for the discrete detailed-balanced AC problem}
\label{sec:cond_bounds_db_ac_problem}

We provide explicit lower and upper bounds for the conditioning of the regression matrix \(\bm R^{\mathrm{db}}\) defined in \eqref{eq:db_AC_reg_mat} in terms of the number of samples \(n_s\), the number of reconstruction points \(n_t\) and the product \(\beta\mu\).
Moreover, since row permutations do not affect the condition number, we may write the nodes as 
\(
    x_i^\mathrm{db} = \cosh(s_i h_t),
\)
leading to the following result.

\begin{restatable}{theorem}{dbACbounds}
    \label{theo:db_AC_bounds}
    Let \(\beta, \mu > 0\) and \(n_s,n_t \in \mathbb N\), \(n_s \ge n_t\) be given and let the regression matrix \(\bm R^{\mathrm{db}} \in \mathbb R^{(n_s+1) \times (n_t+1)}\) given by
    \begin{equation}
        R_{i,j}^{\mathrm{db}}
        = 
        2 
        \exp\left(-\frac{\beta}{2} \, t_j\right) 
        T_j\left(\cosh(s_ih_t)\right),
        \qquad
        s_i = h_s i,
        \qquad
        t_j = h_t j,
    \end{equation}
    with \(h_s = \beta / (2n_s)\), \(h_t = \mu / n_t\), for \(i = 0, \ldots, n_s\) and \(j = 0, \ldots, n_t\), and set
    \begin{equation}
        q \coloneqq \left\lfloor 
        \frac{n_s + 1}{n_t + 1}
        \right\rfloor.
    \end{equation}
    Then, its condition number satisfies the bounds
    \begin{equation}
        \label{eq:db_AC_bounds}
        2 \left(
            \frac{
                2n_t
            }{
                \beta\mu
            }
        \right)^{2n_t}
        \le 
        \kappa(\bm R^{\mathrm{db}})
        \le
        \sqrt{
            \frac{
                (n_s+1)(n_t+1)
            }{
                q
            }
        }
        \exp(\beta\mu)
        \left(
            \frac{
                4e n_s
            }{
                \beta\mu q
            }
        \right)^{2n_t}.
    \end{equation}
\end{restatable}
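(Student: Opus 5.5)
We write \(\bm R^{\mathrm{db}} = 2\bm V^{\mathrm{db}} \bm D\), where \(V^{\mathrm{db}}_{i,j} = T_j(x_i)\) with \(x_i = \cosh(s_i h_t)\in[1,\cosh(\beta\mu/(2n_t))]\) and \(\bm D = \operatorname{diag}(\exp(-\beta t_j/2))\). We then bound \(\sigma_{\max}\) and \(\sigma_{\min}\) separately, mirroring the argument for Theorem~\ref{theo:general_AC_bounds}.

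\textbf{Lower bound.} We compare two columns. Since \(\|\bm R^{\mathrm{db}}\bm e_0\|\le\sigma_{\max}\) and \(\sigma_{\min}\le \|\bm R^{\mathrm{db}}\bm c\|/\|\bm c\|\) for any \(\bm c\), it suffices to find a coefficient vector on which \(\bm R^{\mathrm{db}}\) is tiny. Because the nodes lie in an interval \([1,1+\delta]\) of length \(\delta=\cosh(\beta\mu/(2n_t))-1\), we take the coefficients of the polynomial \(p(x)=T_{n_t}\bigl(\tfrac{2(x-1)}{\delta}-1\bigr)\), expanded in the Chebyshev basis and rescaled by \(\bm D^{-1}\). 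This polynomial is bounded by \(1\) at all nodes, while its leading coefficient is \(2^{n_t-1}(2/\delta)^{n_t}\), which equals the coefficient of \(T_{n_t}\) up to the factor \(2^{n_t-1}\). Hence the coefficient vector has norm of order \(\delta^{-n_t}\). Using \(\delta\approx(\beta\mu)^2/(8n_t^2)\), this gives roughly \((8n_t^2/(\beta\mu)^2)^{n_t}\ge (2n_t/(\beta\mu))^{2n_t}\). A cleaner alternative is to use the extremal property of Chebyshev polynomials on \([1,1+\delta]\) directly. We must also track the diagonal factors \(\exp(\pm\beta t_j/2)\), which contribute at most \(\exp(\beta\mu/2)\) and can be absorbed because \(\cosh y-1\le \tfrac{y^2}{2}\cosh y\). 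The constant \(2\) then comes from comparing the column norms, \(\|\bm R^{\mathrm{db}}\bm e_0\|=2\sqrt{n_s+1}\) against \(\sqrt{n_s+1}\) times the values at the nodes. If \(\delta\)-smallness fails in some parameter range, the bound is trivial there, since \(\kappa\ge1\).

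\textbf{Upper bound.} First, \(\sigma_{\max}\le\sqrt{(n_s+1)(n_t+1)}\max|R_{ij}|\). Every entry satisfies \(|R_{ij}|\le 2\) because \(\cosh((\beta/2-s)t)\le e^{\beta t/2}\). For \(\sigma_{\min}\), we select \(n_t+1\) rows forming a square subsystem with nodes spaced at least \(q\) sample steps apart, namely \(i_k=kq\). Then
\[\sigma_{\min}(\bm R^{\mathrm{db}})\ge \sigma_{\min}(\text{any } (n_t+1)\text{-row submatrix}),\]
and more sharply, partitioning the rows into \(q\) disjoint interleaved subsets gives \(\sigma_{\min}^2\ge q\cdot\min\sigma_{\min}^2(\text{sub})\). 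This is where the factor \(1/\sqrt q\) comes from.

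For each square Chebyshev–Vandermonde block we use Gautschi-type bounds on inverses: \(\|(\bm V^{\mathrm{db}}_{\mathrm{sq}})^{-1}\|_\infty\le \max_k\sum_j|\text{coeffs of } \ell_k \text{ in the Chebyshev basis}|\), where \(\ell_k(x)=\prod_{m\ne k}(x-x_m)/(x_k-x_m)\) are the Lagrange polynomials. We bound the denominators below via node separation: \(x_{i}-x_{m}\ge \cosh(s_ih_t)-\cosh(s_mh_t)\ge (s_i^2-s_m^2)h_t^2/2\ge |i-m|\,q^2 h_s^2h_t^2/2\cdot(\ldots)\), which yields products of factorial type \(\prod|k-m|\ge k!(n_t-k)!\). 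We bound the numerators' Chebyshev coefficients by evaluating at the point \(x=\cosh(\beta\mu/2n_t)\cdot\)const, using \(|T_j|\)-coefficient bounds of the form \(\sum_j|c_j|\le \max_{|z|\le\rho}|p|\) with \(\rho\) related to the node range. Stirling's formula \(n_t!\ge (n_t/e)^{n_t}\) then produces the \(e\) in \((4en_s/(\beta\mu q))^{2n_t}\). The squared exponent \(2n_t\) arises because \(\cosh\) separations are quadratic in \(h_sh_t\). Finally, \(\|\bm D^{-1}\|= \exp(\beta\mu/2)\), and the leftover from the coefficient-growth estimate supplies the other \(\exp(\beta\mu/2)\), giving \(\exp(\beta\mu)\).

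\textbf{Main obstacle.} The hard step is the Lagrange-coefficient estimate in the Chebyshev basis for clustered nodes near \(1\) with quadratic spacing \(x_{kq}-1\approx (kqh_sh_t)^2/2\). The first thing to try is to substitute \(u=x-1\), so that the nodes become \(u_k\approx c k^2\). Monomial Vandermonde inverse bounds of the form \(\prod_{m\ne k}(1+|u_m|)/|u_k-u_m|\) would then be converted to the Chebyshev basis via \(\|T\text{-to-monomial}\|\le 3^{n_t}\)-type bounds, with the \(u_k^2\)-structure making \(|u_k-u_m|=c|k^2-m^2|=c|k-m|(k+m)\). The products \(\prod|k-m|(k+m)\) then give the \((2n_t)!\)-like factors that yield the exponent \(2n_t\).
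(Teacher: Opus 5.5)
\textbf{Lower bound.} Your lower bound is correct. It follows the paper's scheme: $\bm e_0$ bounds $\sigma_{\max}$ from below, and the test vector is the Chebyshev expansion of a polynomial that is small at the nodes, rescaled by $\bm D^{-1}$ and measured through its $T_{n_t}$-coefficient. Your test polynomial is better, though. The paper uses $(1-x)^{n_t}$, whose $T_{n_t}$-coefficient is only $2^{1-n_t}$. Yours is bounded by $1$ at the nodes and has $T_{n_t}$-coefficient $(2/\delta)^{n_t}$. With $\delta=2\sinh^2(\beta\mu/(4n_t))$, $\sinh x\le xe^{x}$ and the factor $e^{\beta\mu/2}$ from $\bm D^{-1}$, this gives $\kappa(\bm R^{\mathrm{db}})\ge(4n_t/(\beta\mu))^{2n_t}$. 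That exceeds the stated bound by a factor $2^{2n_t-1}$, so your remark about where the constant $2$ comes from is unnecessary; in your construction the two $2\sqrt{n_s+1}$ factors simply cancel. The skeleton of your upper bound also matches the paper's: the Frobenius bound with $|R_{ij}|\le2$, $q$ interleaved square blocks, the factor $e^{-\beta\mu/2}$ from $\bm D$, separations $|x_{r,k}-x_{r,l}|\ge\tfrac12(qh_sh_t)^2|k-l|(k+l)$, and Stirling.

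\textbf{The gap in the upper bound.} The gap is the step you flag as the main obstacle: bounding the Chebyshev coefficients of the Lagrange polynomials $\ell_k$. Neither of your routes delivers it with the required constant.

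The inequality $\sum_j|c_j|\le\max_{|z|\le\rho}|p|$ is asserted without proof and is not a standard estimate for Chebyshev coefficients. For the specific numerators here, with positive roots and $\rho\ge1$, it does hold, but only by the argument given below.

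Your concrete alternative passes to $u=x-1$ and converts with a $3^{n_t}$-type bound on the $T$-to-monomial map. That is the norm of the expanding map: its $\ell^1$-norm grows like $(1+\sqrt2)^{n_t}$. Relative to what is needed, it leaves an extra factor of order $(3/2)^{n_t}$ in $\|\bm V_r^{-1}\|_2$. None of your other steps has exponential slack in $n_t$ to absorb this, so you would end with a strictly weaker bound than $(4en_s/(\beta\mu q))^{2n_t}$.

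\textbf{Missing observation.} The direction actually needed, monomial-to-Chebyshev, is an $\ell^1$-contraction. Expanding $\cos^j\theta$ in multiple-angle cosines gives $x^j=\sum_{i=0}^{j}b_{ji}T_i(x)$ with $b_{ji}\ge0$, and setting $\theta=0$ gives $\sum_i b_{ji}=1$.

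Since all nodes are positive, the monomial coefficients of $\prod_{m\ne k}(x-x_m)$ alternate in sign. Their absolute sum is therefore the modulus of the value at $x=-1$, namely $\prod_{m\ne k}(1+x_m)$. Hence $\sum_j|c^{\mathrm{cheb}}_j(\ell_k)|\le\prod_{m\ne k}(1+x_m)/|x_k-x_m|$ with no loss; this is Proposition~\ref{prop:cheb_vander_inverse_estimate}. Your $u$-route could be rescued the same way, since $(x-1)^j$ has Chebyshev coefficient sum exactly $2^j$, but that again rests on the contraction property.

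\textbf{Finishing the bound.} Work directly in $x$:
\begin{itemize}
\item $1+\cosh y=2\cosh^2(y/2)$ cancels the $2$ in $\cosh a-\cosh b=2\sinh\frac{a+b}{2}\sinh\frac{a-b}{2}$;
\item $\prod_{l\ne k}\cosh^2(y_{r,l}/2)\le e^{\beta\mu/2}$;
\item bound $\|\bm V_r^{-1}\|_2$ by the sum over $k$ of these column sums (not by a single $\max_k$), and use the exact identity $\sum_k\prod_{l\ne k}\frac{1}{(k+l)|k-l|}=\frac{4^{n_t}}{(2n_t)!}$.
\end{itemize}
This produces exactly the stated upper bound.
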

\begin{proof}
    The proof is deferred to Appendix~\ref{app:db_AC_bounds}.
\end{proof}

To compare the discretizations of the detailed-balanced with the general AC problem at the same resolution, we choose the sample sizes for the detailed-balance AC problem as
\begin{equation}
    n_s \mapsto \frac{n_s}{2} \in \mathbb N,
    \qquad
    n_t \mapsto \frac{n_t}{2} \in \mathbb N.
\end{equation}
Indeed, detailed-balance reduces the \(s\)-domain to \([0, \beta/2]\), while the negative part of the \(t\)-domain is determined by reflection from its positive half.
Thus, both domains are half as long as in the general AC problem, so we halve the corresponding sample sizes to keep the same resolution.

As a consequence, for this choice, the condition number of \(\bm R^{\mathrm{db}}\) satisfies the lower bound
\begin{equation}
    \label{eq:dbVandermondeBound}
    \kappa(\bm R^{\mathrm{db}})
    \ge
    2
    \left(
        \frac{n_t}{\beta \mu}
    \right)^{n_t}.
\end{equation} 
For fixed \(\beta \mu\), this shows that the detailed-balanced discrete AC problem remains super-exponentially ill-conditioned.
As in the general AC problem, oversampling in the \(s\)-variable cannot remove this super-exponential growth. 
The dependence of the upper bound on \(n_s\) can likewise be isolated as in the general AC problem, via using \eqref{eq:remove_ns_dependence}, yielding
\begin{equation}
    \label{eq:dbVandermondeUpperBound}
    \kappa(\bm R^{\mathrm{db}})
    \le
    \sqrt{2}
    \left(\frac{n_t}{2} + 1\right)
    \exp(\beta\mu)
    \left(1 + \frac{1}{q}\right)^{n_t+1/2}
    \left(
        \frac{2e (n_t+1)}{\beta \mu}
    \right)^{n_t}.
\end{equation}
Comparing with the general AC problem, imposing detailed-balance nearly halves the prefactor from \(n_t+ 1\) to \(n_t/2 + 1\) and the exponential rate in \(\beta\mu\) from \(3\beta\mu\) to \(\beta\mu\). Equivalently, the corresponding exponential base is reduced from
\begin{equation}
    e^3\approx 20.09
    \qquad\text{to}\qquad
    e^{1}\approx 2.71.
\end{equation}

When the number of nodes remains in the same low-dimensional range as for the regression matrix \(\bm R\) of the discretized general AC problem
\begin{equation}
    1 \le n_t + 1 \le \frac{\beta\mu}{2e},
\end{equation}
we obtain a \emph{substantially improved conditioning}
\begin{equation}
    \label{eq:db_lowdim_upperbound}
    \kappa(\bm R^{\mathrm{db}})
    \le
    \sqrt{2}
    \left(\frac{n_t}{2} + 1\right)
    \exp(\beta\mu)
    \left(1 + \frac{1}{q}\right)^{n_t+1/2}
\end{equation}
Comparing \eqref{eq:dbVandermondeBound} with the lower bound in \eqref{eq:general_AC_bounds}, it is evident that imposing detailed-balance does not remove the super-exponential ill-conditioning. At the same time, comparing \eqref{eq:db_lowdim_upperbound} with \eqref{eq:lowdim_upperbound}, shows that it substantially improves the conditioning in the low-dimensional regime and enlarges the range of numerically manageable discretizations.

\subsection{Numerical validation of the condition number bounds for the discrete detailed-balanced AC problem}
\label{sec:val_cond_bounds_db_ac_problem}

Here, we validate the derived bounds from the previous section by comparing them with the numerically computed condition numbers of \(\bm R^{\mathrm{db}}\). In Figure~\ref{fig:db_conditioning}, we compare the numerically computed condition numbers of \(\bm R^{\mathrm{db}}\) with the analytical bounds derived in Theorem~\ref{theo:db_AC_bounds}.
Most notably, the comparison with Figure~\ref{fig:general_conditioning} shows that imposing detailed-balance substantially improves the numerical conditioning of the discrete AC problem. 
In particular, much larger discretizations remain below numerical infinity.

For the parameter range shown, the numerical condition numbers are again consistent with the derived bounds. 
Similar to the general AC problem, the lower bound captures the rapid growth of the conditioning for each fixed \(\beta\), but does not reproduce the dependence on \(\beta\).
The upper bound follows the numerical behavior again more closely. 
While the detailed-balanced problem remains ill-conditioned as the discretization size increases, the numerical results confirm the substantial improvement in terms of conditioning over the general AC discretization predicted by the analysis.

\input{figures/db-conditioning}

Having analyzed and numerically validated the conditioning of both discretizations of the AC problem, we conclude the paper by summarizing the main findings and their implications.

\section{Conclusion}
\label{sec:conclusion}

In this work, we considered the ill-posedness of the inverse two-sided Laplace transform (see Definition~\ref{def:two_sided_laplace_transform}) and analyzed the ill-conditioning of its finite-dimensional discretizations occurring in quantum Monte Carlo simulations of quantum many-body systems.

In Section~\ref{sec:two_sided_laplace_transform}, we demonstrated that the inverse two-sided Laplace transform is an unbounded operator in Proposition~\ref{prop:inv_laplace_transform_unbounded} and recalled the exponential decay of its singular values.
Moreover, we quantified the forward-backward amplification of localized perturbations. 
While the unweighted formulation admits exponential amplification in \(\beta\mu\), the weighted formulation, using the physically motivated weight function
\begin{equation}
    w(t) = 1 + \exp(-\beta t),
\end{equation}
reduces the amplification to at most linear in \(\beta\mu\), see Proposition~\ref{prop:forward_backward_amp}.
In particular, imposing detailed-balance directly for the model class equivalently accounts for the compensating effect of the weight function.

Next, we investigated the finite-dimensional AC problem with and without imposing detailed-balance in Section~\ref{sec:cond_ac_problem} and Section~\ref{sec:cond_db_ac_problem}, respectively.

First, we discretized the general AC problem using equidistant grids. 
The resulting regression matrix~\eqref{eq:general_AC_reg_mat} takes the form of a monomial Vandermonde matrix with exponentially distributed nodes and a diagonal prefactor. 
We derived lower and upper bounds for its condition number in Theorem~\ref{theo:general_AC_bounds}, showing that it grows super-exponentially with the number of points \(n_t\) in the \(t\)-variable for fixed \(\beta\mu\). 
Increasing the number of samples \(n_s\) in the \(s\)-variable cannot compensate for this growth. 
Via the upper bound, we identified a low-dimensional regime in which the discrete problem can remain numerically manageable.

Second, we discretized the detailed-balanced AC problem using equidistant grids. 
Here, imposing detailed-balance as a physical constraint changes the structure of the regression matrix~\eqref{eq:db_AC_reg_mat} to a Chebyshev Vandermonde-type matrix, again with exponentially distributed nodes and a diagonal prefactor.
We derived lower and upper bounds for its condition number in Theorem~\ref{theo:db_AC_bounds}, which likewise establish super-exponential growth with increasing number of samples \(n_t\) in the \(t\)-variable.

At the same resolution, however, detailed-balance substantially improves the upper bound. 
Comparing~\eqref{eq:general_AC_bounds} and~\eqref{eq:db_AC_bounds}, the exponential dependence on \(\beta\mu\) is reduced from \(\exp(3\beta\mu)\) to \(\exp(\beta\mu)\). 
This improves the corresponding exponential base from \(e^3\approx20.09\) to \(e\approx2.71\), which particularly helps in the low-dimensional regime. Our numerical experiments in Section~\ref{sec:val_cond_bounds_ac_problem} and \ref{sec:val_cond_bounds_db_ac_problem} confirmed that the aforementioned improvement applies also for the numerically computed condition numbers.
Thus, imposing detailed-balance does not remove the ill-conditioning but it strongly mitigates its severity.

Although our analysis is formulated for a Dirac-delta representation of the unknown spectrum, its implications extend to a broader class of kernel-based models. 
When the basis functions are generated by translating a common kernel, \(\phi_j(t) = \phi(t-t_j)\), the two-sided Laplace transform factorizes as
\begin{equation}
    \mathcal L[\phi_j] (s) = \exp(-st_j) \mathcal L[\phi](s).
\end{equation}
Hence, replacing Dirac-delta kernels by translated kernels changes the regression matrix only by an additional diagonal row scaling, and the present conditioning analysis applies up to the condition number of this prefactor.
This includes, for example, Gaussian kernels considered by Robles \emph{et al.}~\cite{Robles2025}.
Although the additional diagonal scaling may worsen the conditioning, kernel-based representations can require substantially fewer basis functions to resolve smooth spectra, hence keeping the reconstruction within the better-conditioned low-dimensional regime identified above.

Overall, our results indicate that practical analytic continuation should be performed in a low-dimensional regime in the \(t\)-variable, while oversampling in the \(s\)-variable does not substantially improve the conditioning of the discrete AC problem. 
In this sense, restricting the reconstruction to a suitably low-dimensional discretization can itself be viewed as a form of \emph{self-regularization}~\cite{natterer1977regularisierung,pereverzev2000characterization,mathe2001optimal}.

Our analysis further demonstrates the benefit of incorporating available physical constraints directly into the model. 
For the DSF considered here, the detailed-balance relation implied by the Kubo--Martin--Schwinger condition reduces the admissible solution space and substantially improves the conditioning. 
Problem-specific physical constraints or prior information are also available in a variety of related quantum analytic-continuation problems, including the reconstruction of single-particle spectral functions from imaginary-time Green's functions~\cite{Jarrell1996,Otsuki2017}, dynamical spin response functions from imaginary-time correlation functions~\cite{Sandvik_PRB_1998, ShaoSandvik_PhysRep_2023}, optical conductivities from current--current correlation functions~\cite{Rabani2002}, and hadronic spectral functions from Euclidean correlators in lattice QCD~\cite{Asakawa_PPNP_2001}.

A systematic investigation of how such problem-specific physical constraints affect the conditioning of the discretized inverse problem is an interesting direction for future work.

\section*{Acknowledgements}
We are grateful for stimulating discussions with Uwe Hernandez Acosta and Alexander Benedix Robles.

This work has received funding from the European Research Council (ERC) under the European Union’s Horizon 2022 research and innovation programme (Grant agreement No. 101076233, "PREXTREME"). 
Views and opinions expressed are, however, those of the authors only and do not necessarily reflect those of the European Union or the European Research Council Executive Agency. Neither the European Union nor the granting authority can be held responsible for them.
We gratefully acknowledge funding from the Deutsche Forschungsgemeinschaft (DFG) via project DO 2670/1-1.

This work was partially funded by the Center for Advanced Systems Understanding (CASUS), financed by Germany's Federal Ministry of Research, Technology and Space (BMFTR) and by the Saxon Ministry for Science, Culture and Tourism (SMWK) with tax funds on the basis of the budget approved by the Saxon State Parliament.

\newpage

\appendix
\section{Proof: Condition number bounds for the general AC problem}
\label{app:proof_general_AC_problem}
\generalACbounds*
\begin{proof}
    Let the Gram matrix 
    \begin{equation}
        \bm G \coloneqq \bm R^\top \bm R.
    \end{equation}
    Then, the conditioning of \(\bm R\) is determined by 
    \begin{equation}
        \kappa(\bm R) 
        = 
        \sqrt{\frac{\max \lambda (\bm G)}{\min\lambda(\bm G)}}.
    \end{equation}
    
    We begin with proving the lower bound first by considering the polynomial
    \begin{equation}
        (1-x)^{n_t} = \sum_{j=0}^{n_t} c_j x^j, \qquad c_j \coloneqq (-1)^j {n_t \choose j}.
    \end{equation}
    Its Euclidean norm satisfies
    \begin{equation}
        \|\bm c\|_2^2 = \sum_{j=0}^{n_t} {n_t \choose j}^2 = {2 n_t \choose n_t},
    \end{equation}
    while
    \begin{equation}
        (\bm R \bm c)_i = \exp(\mu s_i) \left(1-\exp(-h_t s_i)\right)^{n_t}.
    \end{equation}
    Using \(1 - \exp(-x) \le x\) for \(x \ge 0\) and \(s_i \le \beta\), we obtain
    \begin{equation}
        \min \lambda(\bm G) 
        \le 
        \frac{
            \|\bm R \bm c\|_2^2
        }{
            \|\bm c\|_2^2
        } 
        \le 
        \frac{1}{{2 n_t \choose n_t}} (\beta h_t)^{2n_t} 
        \sum_{i=0}^{n_s} \exp(2\mu s_i).
    \end{equation}
    
    On the other hand, evaluating the Rayleigh quotient at the first canonical
    basis vector \(\bm e_0\) yields
    \begin{equation}
        \max \lambda(\bm G) \ge \|\bm R \bm e_0\|_2^2 = \sum_{i=0}^{n_s} \exp(2\mu s_i).
    \end{equation}
    Consequently,
    \begin{equation}
        \kappa(\bm R)^2
        \ge
        {2n_t \choose n_t}(\beta h_t)^{-2n_t}.
    \end{equation}
    Using standard estimates, we obtain
    \begin{equation}
        {2 n_t \choose n_t} \ge \frac{4^{n_t}}{2\sqrt{n_t}},
    \end{equation}
    and substituting \(h_t=2\mu/n_t\) delivers the asserted lower bound
    \begin{equation}
        \kappa(\bm R)^2 
        \ge
        \frac{1}{2\sqrt{n_t}} 
        \left(\frac{n_t}{\beta \mu}\right)^{2n_t}.
    \end{equation}
    
    We proceed by proving the upper bound. Recall that
    \begin{equation}
        q 
        \coloneqq 
        \left\lfloor \frac{n_s + 1}{n_t + 1}\right\rfloor.
    \end{equation}
    Consider the \(q\) disjoint sets of row indices
    \begin{equation}
        i_{r,k} = r + k q, 
        \qquad
        r = 0, \ldots, q - 1,
        \qquad
        k = 0, \ldots, n_t,
    \end{equation}
    and denote the corresponding square submatrices of \(\bm R\) by \(\bm B_r\).
    With \(\bm C\) containing the remaining rows, then
    \begin{equation}
        \bm G 
        = 
        \sum_{r=0}^{q-1} \bm B_r^\top \bm B_r + \bm C^\top \bm C.
    \end{equation}
    Since \(\bm C^\top \bm C\) is positive semidefinite, it follows
    \begin{equation}
        \label{eq:lower_min_lambda_G}
        \min \lambda(\bm G) \ge \sum_{r=0}^{q-1} \min \lambda (\bm B_r^\top \bm B_r).
    \end{equation}
    Each block admits the factorization
    \begin{equation}
        (\bm B_r)_{k,j} 
        =
        \exp(\mu s_{i_{r,k}}) \exp\left(-h_t s_{i_{r,k}}\right)^j,
    \end{equation}
    for \(k, j = 0, \ldots, n_t\). Since \(\exp(\mu s_{i_{r,k}}) \ge 1\), defining \((\bm V_r)_{k,j} \coloneqq x_{r,k}^j\) with \(x_{r,k} \coloneqq \exp(-h_t s_{i_{r,k}})\) yields
    \begin{equation}
        \|\bm B_r z\|_2 \ge \|\bm V_r z\|_2,
        \qquad
        z \in \mathbb C^{n_t+1},
    \end{equation}
    and therefore
    \begin{equation}
        \min \lambda(\bm B_r^\top \bm B_r) 
        \ge 
        \min \lambda(\bm V_r^\top \bm V_r) 
        = 
        \frac{1}{\|\bm V_r^{-1}\|_2^2}.
    \end{equation}
    
    To bound \(\|\bm V_r^{-1}\|_2\), we use Proposition~\ref{prop:cheb_vander_inverse_estimate}
    \begin{equation}
        \|\bm V_r^{-1}\|_2 \le \sum_{k=0}^{n_t} \prod_{l=0, l \neq k}^{n_t} \frac{1 + x_{r,l}}{|x_{r,k} - x_{r,l}|}.
    \end{equation}
    Since \(0 < x_{r,l} \le 1\)
    \begin{equation}
        \prod_{l=0, l \neq k}^{n_t} (1 + x_{r,l}) 
        \le 
        2^{n_t}
    \end{equation}
    We proceed by estimating the denominator. Consider, for \(l > k\)
    \begin{equation}
        x_{r,k} - x_{r,l}
        = 
        \exp(-h_t s_{i_{r,k}}) (1 - \exp(-h_t (s_{i_{r,l}} - s_{i_{r,k}})).
    \end{equation}
    Using \(1-\exp(-x) \ge x\exp(-x),\) for \(x \ge 0\), we obtain
    \begin{equation}
        x_{r,k} - x_{r,l}
        \ge
        \exp(-h_t s_{i_{r,l}})
        h_t (s_{i_{r,l}} - s_{i_{r,k}}), 
    \end{equation}
    which can be rewritten as
    \begin{equation}
        x_{r,k} - x_{r,l}
        \ge
        h_t h_s
        \exp(-h_t s_{i_{r,l}}) 
        (i_{r,l} - i_{r,k})
    \end{equation}
    and by using \(s_{i_{r,l}} \le \beta\), we obtain
    \begin{equation}
        x_{r,k} - x_{r,l}
        \ge
        \underbrace{q h_t h_s\exp(-h_t \beta)}_{=: \delta}
            (l - k).
    \end{equation}
    Hence, we have
    \begin{equation}
        |x_{r,k} - x_{r,l}|
        \ge
        \delta |k-l|,
    \end{equation}
    which yields
    \begin{equation}
        \prod_{l \neq k} |x_{r,k} - x_{r,l}|
        \ge
        \delta^{n_t} 
        \prod_{l \neq k} |k-l|.
    \end{equation}
    This can be further simplified to
    \begin{equation}
        \prod_{l \neq k} |x_{r,k} - x_{r,l}|
        \ge
        \delta^{n_t}
        \big(k(k-1) \cdots 1\big)
        \big(1\cdot2\cdots(n_t-k)\big)
        =
        \delta^{n_t}k!(n_t-k)!.
    \end{equation}
    Consequently
    \begin{equation}
        \|\bm V_r^{-1}\|_2
        \le
        \frac{2^{n_t}}{\delta^{n_t}} \sum_{k=0}^{n_t} \frac{1}{k! (n_t-k)!}
        =
        \frac{2^{n_t}}{\delta^{n_t}} \sum_{k=0}^{n_t} {n_t \choose k}
        =
        \frac{4^{n_t}}{n_t! \delta^{n_t}}.
    \end{equation}
    Inserting this into \eqref{eq:lower_min_lambda_G} gives the lower bound
    \begin{equation}
        \label{eq:lower_bound_min_G}
        \min \lambda(\bm G)
        \ge 
        q (n_t!)^2 \exp(-4\beta\mu) \left(\frac{\beta \mu q}{2 n_s n_t}\right)^{2n_t}.
    \end{equation}
    
    For the largest eigenvalue of \(\bm G\), we use
    \begin{equation}
        \max \lambda (\bm G)
        =
        \|\bm R\|_2^2
        \le
        \|\bm R\|_F^2.
    \end{equation}
    Since \(0 \le s_i \le \beta\) and \(-\mu \le t_j \le \mu\)
    \begin{equation}
        |R_{i,j}|^2 = \exp(-2s_it_j) \le \exp(2\beta\mu),
    \end{equation}
    and hence
    \begin{equation}
        \max \lambda(\bm G) \le (n_s+1)(n_t+1)\exp(2\beta\mu).
    \end{equation}
    
    Combining this together with \eqref{eq:lower_bound_min_G} yields
    \begin{equation}
        \kappa(\bm R)^2 
        = 
        \frac{(n_s+1)(n_t+1)}{q} 
        \exp(6 \beta \mu) 
        \frac{1}{(n_t!)^2} 
        \left(\frac{
            2 n_s n_t
            }{
            \beta\mu q
            }\right)^{2n_t}.
    \end{equation}
    Using the fact that 
    \begin{equation}
        n_t! \ge \left(\frac{n_t}{e}\right)^{n_t},
    \end{equation}
    we finally get the asserted upper bound
    \begin{equation}
        \kappa(\bm R)
        \le
        \sqrt{\frac{(n_s+1)(n_t+1)}{q}} 
        \exp(3\beta\mu) 
        \left(\frac{
            2 e n_s
            }{
            \beta\mu q
            }\right)^{n_t}.
    \end{equation}
\end{proof}

\section{Proof: Condition number bounds for the detailed-balanced AC problem}
\label{app:db_AC_bounds}
\dbACbounds*
\begin{proof}
    For brevity, throughout this proof we write \(\bm R \coloneqq \bm R^{\mathrm{db}}\).
    Let the Gram matrix 
    \begin{equation}
        \bm G \coloneqq \bm R^\top \bm R.
    \end{equation}
    Then, the conditioning of \(\bm R\) is determined by
    \begin{equation}
        \kappa(\bm R) 
        = 
        \sqrt{\frac{\max \lambda (\bm G)}{\min\lambda(\bm G)}}.
    \end{equation}
    
    We begin with proving the lower bound first by considering the polynomial
    \begin{equation}
        (1-x)^{n_t} 
        = 
        \sum_{j=0}^{n_t} c_j T_j(x).
    \end{equation}
    Since the leading coefficient of \(T_{n_t}\) in the monomial basis is \(2^{n_t-1}\), comparing both sides gives \begin{equation}
        c_{n_t} = (-1)^{n_t}2^{1-n_t}.
    \end{equation}
    Recalling the diagonal prefactor \(\exp(-\beta t_j/2)\) in the columns of \(\bm R\), we define
    \begin{equation}
        \widetilde c_j
        \coloneqq 
        \exp\left(\frac{\beta}{2} t_j\right) c_j,
    \end{equation}
    which yields
    \begin{equation}
        \|\widetilde{\bm{c}}\,\|_2
        \ge
        \exp\left(
            \frac{\beta}{2}t_{n_t}
        \right)|c_{n_t}| 
        = 
        2^{1-n_t} 
        \exp\left(\frac{\beta\mu}{2}\right).
    \end{equation}
    Further, it holds
    \begin{equation}
        |(\bm R \widetilde{\bm{c}})_i|
        = 
        2|1 - \cosh(s_i h_t)|^{n_t}
        \le
        2|1-\cosh(n_sh_sh_t)|^{n_t}.
    \end{equation}
    Using \(\cosh x - 1 = 2 \sinh^2 (x/2)\) and \(\sinh(x) \le x \exp(x)\), we obtain
    \begin{equation}
        |(\bm R \widetilde{\bm{c}})_i|
        \le
        2^{n_t+1} \sinh^{2 n_t}\left(
            \frac{\beta \mu}{4 n_t}
        \right)
        \le
        2^{n_t+1}\left(\frac{\beta\mu}{4 n_t}\right)^{2n_t}
        \exp\left(
            \frac{\beta\mu}{2}
        \right).
    \end{equation}
    Thus, we obtain
    \begin{equation}
        \min \lambda(\bm G) 
        \le 
        \frac{
            \|\bm R \widetilde{\bm{c}}\,\|_2^2
        }{
            \|\widetilde{\bm{c}}\,\|_2^2
        } 
        \le 
        \frac{
            (n_s+1) 2^{2n_t+2}
            \left(
                \frac{\beta\mu}{4n_t}
            \right)^{4n_t} \exp(\beta\mu)
        }{
            2^{2-2n_t} \exp(\beta\mu)
        }
        \le
        (n_s+1) \left(
            \frac{\beta\mu}{2n_t}
        \right)^{4n_t}
    \end{equation}
    
    On the other hand, evaluating the Rayleigh quotient at the first canonical
    basis vector \(\bm e_0\) yields
    \begin{equation}
        \max \lambda(\bm G) 
        \ge 
        \|\bm R \bm e_0\|_2^2 
        = 
        4 (n_s+1).
    \end{equation}
    
    Consequently, delivering the asserted lower bound
    \begin{equation}
        \kappa(\bm R)^2
        \ge
        4 \left(
        \frac{2n_t}{\beta\mu}
        \right)^{4n_t}.
    \end{equation}
    
    We proceed by proving the upper bound. Recall that
    \begin{equation}
        q 
        \coloneqq 
        \left\lfloor \frac{n_s + 1}{n_t + 1}\right\rfloor.
    \end{equation}
    Consider the \(q\) disjoint sets of row indices
    \begin{equation}
        i_{r,k} = r + k q, 
        \qquad
        r = 0, \ldots, q - 1,
        \qquad
        k = 0, \ldots, n_t,
    \end{equation}
    and denote the corresponding square submatrices of \(\bm R\) by \(\bm B_r\).
    With \(\bm C\) containing the remaining rows, then
    \begin{equation}
        \bm G 
        = 
        \sum_{r=0}^{q-1} \bm B_r^\top \bm B_r + \bm C^\top \bm C.
    \end{equation}
    Since \(\bm C^\top \bm C\) is positive semidefinite, it follows
    \begin{equation}
        \min \lambda(\bm G) \ge \sum_{r=0}^{q-1} \min \lambda (\bm B_r^\top \bm B_r).
    \end{equation}
    Each block admits the factorization
    \begin{equation}
        (\bm B_r)_{k,j} 
        = 
        2 \exp\left(-\frac{\beta}{2} t_j\right) T_j(x_{r,k}),
        \qquad
        x_{r,k} \coloneqq \cosh((r+kq)h_sh_t)
    \end{equation}
    for \(k, j = 0, \ldots, n_t\). Since 
    \(
        \exp(-\beta t_j/2) \ge \exp(-\beta\mu/2),
    \) 
    defining \((\bm V_r)_{k,j} \coloneqq T_j(x_{r,k})\) yields
    \begin{equation}
        \|\bm B_r z\|_2 
        \ge 
        2 
        \exp\left(-\frac{\beta\mu}{2}\right) 
        \|\bm V_r z\|_2,
        \qquad
        z \in \mathbb C^{n_t+1},
    \end{equation}
    and therefore
    \begin{equation}
        \min \lambda(\bm B_r^\top \bm B_r) 
        \ge 
        4 \exp(-\beta\mu) \frac{1}{\|\bm V_r^{-1}\|_2^2}.
    \end{equation}
    
    To bound \(\|\bm V_r^{-1}\|_2\), we use Proposition~\ref{prop:cheb_vander_inverse_estimate}
    \begin{equation}
        \|\bm V_r^{-1}\|_2 \le \sum_{k=0}^{n_t} \prod_{l=0, l \neq k}^{n_t} \frac{1 + x_{r,l}}{|x_{r,k} - x_{r,l}|}.
    \end{equation}
    Set
    \begin{equation}
        y_{r,k} \coloneqq (r+kq) h_sh_t,
    \end{equation}
    such that \(x_{r,k} = \cosh(y_{r,k})\). Using
    \begin{equation}
        1 + \cosh(y) = 2 \cosh^2(y/2)
    \end{equation}
    and
    \begin{equation}
        |\cosh(a)-\cosh(b)|
        =
        2
        \sinh\left(\frac{a+b}{2}\right)
        \sinh\left(\frac{|a-b|}{2}\right),
    \end{equation}
    we obtain
    \begin{equation}
        \frac{
            1+x_{r,l}
        }{
            |x_{r,k}-x_{r,l}|
        }
        =
        \frac{
            \cosh^2(y_{r,l}/2)
        }{
            \sinh\left(\frac{y_{r,k}+y_{r,l}}{2}\right)
            \sinh\left(\frac{|y_{r,k}-y_{r,l}|}{2}\right)
        }.
    \end{equation}
    Since \(r \ge 0\), we have
    \begin{equation}
        \frac{y_{r,k} + y_{r,l}}{2}
        \ge
        (k + l)\frac{q h_s h_t}{2}.
    \end{equation}
    Using
    \begin{equation}
        \sinh(mx) \ge m\sinh(x),
        \qquad
        m \in \mathbb N,
        \quad x \ge 0,
    \end{equation}
    we therefore get
    \begin{equation}
        \sinh\left(
            \frac{y_{r,k} + y_{r,l}}{2}
        \right)
        \ge
        (k+l)
        \sinh\left(\frac{qh_sh_t}{2}\right),
    \end{equation}
    and similarly
    \begin{equation}
        \sinh\left(
            \frac{|y_{r,k}-y_{r,l}|}{2}
        \right)
        \ge
        |k-l|
        \sinh\left(\frac{qh_sh_t}{2}\right).
    \end{equation}
    Combining both delivers
    \begin{equation}
        \frac{
            1+x_{r,l}
        }{
            |x_{r,k}-x_{r,l}|
        }
        \le
        \frac{
            \cosh^2(y_{r,l}/2)
        }{
            (k+l)|k-l|
            \sinh^2(q h_s h_t/2)
        }.
    \end{equation}
    Moreover, since \(i_{r,l}\le n_s\), we obtain
    \begin{equation}
        0
        \le
        y_{r,l}
        \le
        n_s h_s h_t
        =
        \frac{\beta\mu}{2n_t}.
    \end{equation}
    Using \(\cosh(x)\le \exp(x)\) gives
    \begin{equation}
        \prod_{l=0, l\neq k}^{n_t}
        \cosh^2(y_{r,l}/2)
        \le
        \exp\left(
            \sum_{l=0, l \neq k}^{n_t}
            y_{r,l}
        \right)
        \le
        \exp\left(\frac{\beta\mu}{2}\right).
    \end{equation}
    Consequently
    \begin{equation}
        \|\bm V_r^{-1}\|_2
        \le
        \frac{
            \exp(\beta\mu/2)
        }{
            \sinh^{2n_t}(q h_s h_t/2)
        }
        \sum_{k=0}^{n_t}
        \prod_{l=0, l\neq k}^{n_t}
        \frac{1}{
            (k+l)|k-l|
        }.
    \end{equation}
    
    Using Proposition~\ref{prop:binomial_type_formula} and the fact that \(\sinh(x) \ge x\) for \(x \ge 0\), we obtain
    \begin{equation}
        \|\bm V_r^{-1}\|_2
        \le
        \frac{
            \exp(\beta\mu/2)
        }{
            (2n_t)!
        }
        \left(
            \frac{
                8n_s n_t
            }{
                \beta\mu q
            }
        \right)^{2n_t}.
    \end{equation}
    It follows that
    \begin{equation}
        \min\lambda(\bm B_r^\top\bm B_r)
        \ge
        4\exp(-2\beta\mu)
        ((2n_t)!)^2
        \left(
            \frac{
                \beta\mu q
            }{
                8n_s n_t
            }
        \right)^{4n_t}.
    \end{equation}
    Summing over the \(q\) blocks gives
    \begin{equation}
        \label{eq:db_lower_bound_min_G}
        \min\lambda(\bm G)
        \ge
        4q\exp(-2\beta\mu)
        ((2n_t)!)^2
        \left(
            \frac{
                \beta\mu q
            }{
                8n_s n_t
            }
        \right)^{4n_t}.
    \end{equation}
    
    For the largest eigenvalue of \(\bm G\), we use
    \begin{equation}
        \max\lambda(\bm G)
        =
        \|\bm R\|_2^2
        \le
        \|\bm R\|_F^2.
    \end{equation}
    Since
    \begin{equation}
        R_{i,j}
        =
        \exp(-s_it_j)
        +
        \exp(-(\beta-s_i)t_j)
        \le
        2,
    \end{equation}
    we obtain
    \begin{equation}
        \max\lambda(\bm G)
        \le
        4(n_s+1)(n_t+1).
    \end{equation}
    
    Combining this estimate with \eqref{eq:db_lower_bound_min_G} yields
    \begin{equation}
        \kappa(\bm R)^2
        \le
        \frac{
            (n_s+1)(n_t+1)
        }{
            q
        }
        \frac{
            \exp(2\beta\mu)
        }{
            ((2n_t)!)^2
        }
        \left(
            \frac{
                8n_s n_t
            }{
                \beta\mu q
            }
        \right)^{4n_t}.
    \end{equation}
    Finally, using
    \begin{equation}
        (2n_t)!
        \ge
        \left(
            \frac{2n_t}{e}
        \right)^{2n_t},
    \end{equation}
    we obtain the asserted upper bound
    \begin{equation}
        \kappa(\bm R)
        \le
        \sqrt{
            \frac{
                (n_s+1)(n_t+1)
            }{
                q
            }
        }
        \exp(\beta\mu)
        \left(
            \frac{
                4e n_s
            }{
                \beta\mu q
            }
        \right)^{2n_t}.
    \end{equation}
\end{proof}

\section{Auxiliary results}

\begin{proposition}
    \label{prop:cheb_vander_inverse_estimate}
    Let pairwise distinct nodes \(x_0, \ldots, x_n \in (0,\infty)\) and consider the monomial and Chebyshev Vandermonde-type matrices
    \begin{equation}
        \bm V_{\mathrm{mon}} = \left(x_i^j\right)_{i,j=0,\ldots,n},
        \qquad
        \bm V_{\mathrm{cheb}} = \left(T_j(x_i)\right)_{i,j=0,\ldots,n},
    \end{equation}
    where \(T_j\) denotes the Chebyshev polynomial of the first kind of degree \(j\).
    Then, the \(2\)-norms of their inverses satisfy the following bound
    \begin{equation}
        \max \left\{\|\bm V_{\mathrm{mon}}^{-1}\|_2, \|\bm V_{\mathrm{cheb}}^{-1}\|_2\right\}
        \le 
        \sum_{i=0}^n \prod_{k=0, k \neq i}^n \frac{1 + x_k}{|x_i-x_k|}.
    \end{equation}
\end{proposition}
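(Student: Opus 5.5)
The plan is to bound $\|\bm V^{-1}\|_2$ column by column, using the fact that the columns of the inverse of a (generalized) Vandermonde matrix are the coefficient vectors of the Lagrange basis polynomials. Fix one of the two bases, $\phi_j = x^j$ or $\phi_j = T_j$, and write $\bm V = (\phi_j(x_i))_{i,j}$. Since the nodes are pairwise distinct, $\bm V$ is invertible.

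The $i$th column $\bm v_i \coloneqq \bm V^{-1}\bm e_i$ is the coefficient vector $\bm c$ with $\sum_j c_j \phi_j(x_k) = \delta_{ik}$ for all $k$. By uniqueness of polynomial interpolation of degree at most $n$, this polynomial is the Lagrange polynomial
\begin{equation*}
    \ell_i(x) = \prod_{k\neq i}\frac{x-x_k}{x_i-x_k}.
\end{equation*}
For any $\bm y$, the triangle inequality gives
\begin{equation*}
    \|\bm V^{-1}\bm y\|_2 \le \sum_i |y_i|\,\|\bm v_i\|_2 \le \|\bm y\|_2 \sum_i \|\bm v_i\|_1 .
\end{equation*}
It therefore suffices to show that the coefficient vector of $\ell_i$ in either basis has $\ell^1$-norm at most $\prod_{k\neq i}(1+x_k)/|x_i-x_k|$. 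Since the denominator is a constant, this reduces to the following claim.

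\textbf{Claim.} For positive $x_k$, the coefficient $\ell^1$-norm of the monic product $\prod_{k\neq i}(x-x_k)$ is at most $\prod_{k\neq i}(1+x_k)$.

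I would prove the claim by induction on the number of linear factors, using the elementary step that multiplying a polynomial $p$ by $(x-a)$ multiplies the $\ell^1$-norm of its coefficients by at most $1+|a|$. Then $|a| = a$ because the nodes are positive.

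For the monomial basis this step is immediate. If $p=\sum_j c_j x^j$, then $(x-a)p = \sum_j c_j x^{j+1} - a\sum_j c_j x^j$, whose coefficient norm is at most $\|\bm c\|_1 + |a|\,\|\bm c\|_1$.

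For the Chebyshev basis the step uses the three-term recurrence in the form
\begin{equation*}
    xT_0 = T_1, \qquad xT_j = \tfrac12\left(T_{j+1}+T_{j-1}\right) \quad (j\ge1).
\end{equation*}
Hence multiplication by $x$ sends $\sum_j c_j T_j$ to a Chebyshev expansion whose coefficients have $\ell^1$-norm at most $|c_0| + \sum_{j\ge1}\bigl(\tfrac12+\tfrac12\bigr)|c_j| = \|\bm c\|_1$. Multiplication by $a$ scales the norm by $|a|$, so again $(x-a)p$ has norm at most $(1+|a|)\|\bm c\|_1$. In both cases the induction starts from the constant $1$, which has norm $1$.

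The only step needing care is this Chebyshev multiplication estimate, specifically checking that the recurrence does not inflate the $\ell^1$-norm; the factor $\tfrac12$ in it is exactly what makes this work. The rest is bookkeeping. Combining the claim with the column estimate gives the stated bound for $\bm V_{\mathrm{mon}}$ and $\bm V_{\mathrm{cheb}}$ separately, and hence for their maximum.
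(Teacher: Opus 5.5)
Your proof is correct, and its skeleton matches the paper's. The columns of $\bm V^{-1}$ are the coefficient vectors of the Lagrange polynomials $\ell_i$. The coefficient $\ell^1$-norm of $\prod_{k\neq i}(x-x_k)$ is bounded by $\prod_{k\neq i}(1+x_k)$. The column bounds are then summed; your column-wise triangle inequality plays the role of the paper's chain $\|\bm V^{-1}\|_2\le\|\bm V^{-1}\|_F\le\sum_{i,j}|(\bm V^{-1})_{j,i}|$.

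The difference is in how you obtain the key coefficient estimate. The paper treats the monomial case by noting that positive nodes make the coefficients of $\prod_{k\neq i}(x-x_k)$ alternate in sign, so evaluating at $x=-1$ gives their $\ell^1$-norm exactly. It then deduces the Chebyshev case by expanding $x^j=\sum_k b_{j,k}T_k$ with $b_{j,k}\ge 0$ and $\sum_k b_{j,k}=1$, so the change of basis cannot increase the $\ell^1$-norm. You instead run a single induction over the linear factors, using that multiplication by $x$ is an $\ell^1$-contraction on coefficient vectors in both bases.

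What your route buys:
\begin{itemize}
\item It treats both bases uniformly.
\item It never uses positivity, so it gives $\prod_{k\neq i}(1+|x_k|)$ for arbitrary distinct nodes.
\item It is self-contained. The paper states $b_{j,k}\ge 0$ without derivation, and your contraction step actually proves it: multiplying by $x$ sends nonnegative Chebyshev coefficients to nonnegative ones with the same sum.
\end{itemize}

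What the paper's route buys is an equality rather than an inequality in the monomial case for positive nodes. The proposition does not need that sharpness.
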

\begin{proof}
    Let \(\ell_i\) denote the \(i\)th Lagrange polynomial with respect to the nodes \(x_0,\ldots,x_n\)
    \begin{equation}
        \ell_i(x) =\prod_{k=0,k\neq i}^n \frac{x-x_k}{x_i-x_k}.
    \end{equation}
    We first consider its monomial representation, let \(c_{j,i}^{\mathrm{mon}} \in \mathbb R\) such that
    \begin{equation}
        \ell_i(x) = \sum_{j=0}^n c_{j,i}^{\mathrm{mon}} x^j.
    \end{equation}
    Here, the vector 
    \(
        (c_{0,i}^{\mathrm{mon}}, \ldots, c_{n,i}^{\mathrm{mon}})^\top
    \) 
    is the \(i\)th column vector of \(\bm V_{\mathrm{mon}}^{-1}\). 
    Let \(a_{j,i} \in \mathbb R\) denote the coefficients of the numerator, then
    \begin{equation}
        \prod_{k=0, k \neq i}^n (x - x_k) 
        = 
        \sum_{j=0}^n a_{j,i} x^j.
    \end{equation}
    Since all \(x_k > 0\), the coefficients \(a_{j,i}\) alternate in sign. Therefore, evaluating the polynomial at \(x = -1\) delivers
    \begin{equation}
        \sum_{j=0}^n |a_{j,i}|
        =
        \sum_{j=0}^n (-1)^{n-j} a_{j,i}
        =
        (-1)^n \sum_{j=0}^n a_{j,i} (-1)^{j}
        =
        (-1)^n \prod_{k=0, k \neq i}^n (-1 - x_k)
        =
        \prod_{k=0, k \neq i}^n (1 + x_k).
    \end{equation}
    Dividing by the denominator of \(\ell_i\) yields
    \begin{equation}
        \sum_{j=0}^n |c_{j,i}^{\mathrm{mon}}|
        =
        \prod_{k=0, k \neq i}^n \frac{1 + x_k}{|x_i - x_k|}.
    \end{equation}
    Next, we consider the Chebyshev representation, let \(c_{j,i}^{\mathrm{cheb}} \in \mathbb R\) such that
    \begin{equation}
        \label{eq:lag_cheby_repr}
        \ell_i(x) 
        = 
        \sum_{j=0}^n c_{j,i}^{\mathrm{cheb}} T_j(x).
    \end{equation}
    Now, the vector 
    \(
        (c_{0,i}^{\mathrm{cheb}},\ldots,c_{n,i})^\top
    \) 
    is the \(i\)th column vector of \(\bm V_\mathrm{cheb}^{-1}\). 
    Each monomial \(x^m\) admits an expansion in the Chebyshev basis
    \begin{equation}
        x^j 
        = 
        \sum_{k=0}^j b_{j,k} T_k(x),
        \qquad
        b_{j,k} \in \mathbb R.
    \end{equation}
    Setting \(x=\cos\theta\) and using \(T_j(\cos\theta)=\cos(j\theta)\), we obtain
    \begin{equation}
        \cos(\theta)^j 
        =
        \sum_{k=0}^j b_{j,k} \cos(k\theta).
    \end{equation}
    Indeed, \(b_{j,k} \ge 0\), and evaluating this identity at \(\theta=0\) yields
    \begin{equation}
        \sum_{k=0}^j b_{j,k} = 1.
    \end{equation}
    Combining both representations \(a_{j,i}\) and \(b_{j,k}\), we obtain
    \begin{equation}
    	\prod_{k=0, k \neq i}^n (x-x_k)
    	=
    	\sum_{j=0}^n a_{j,i} x^j
    	=
    	\sum_{j=0}^n a_{j,i} \sum_{k=0}^j b_{j,k} T_k(x)
        =
        \sum_{k=0}^n d_{k,i} T_k(x),
        \qquad
        d_{k,i} \coloneqq \sum_{j=k}^n a_{j,i} b_{j,k}.
    \end{equation}
    Now, using the triangle inequality for the Chebyshev coefficients \(d_j\) of \(\ell_i\), we get
    \begin{equation}
        \sum_{k=0}^n |d_{k,i}| 
        \le 
        \sum_{k=0}^n \sum_{j=k}^n |a_{j,i}| |b_{j,k}|
        =
        \sum_{j=0}^n |a_{j,i}| \sum_{k=0}^j |b_{j,k}|
        = 
        \sum_{j=0}^n |a_{j,i}|.
    \end{equation}
    This shows that passing from the monomial expansion to the Chebyshev expansion does not increase the absolute coefficient sum. Dividing again by the denominator of \(\ell_i\) yields
    \begin{equation}
        \sum_{j=0} |c_{j,i}^{\mathrm{cheb}}|
        \le 
        \frac{
            \prod_{k \neq i} (1+x_k)
        }
        {
            \prod_{k \neq i} |x_i-x_k|
        }.
    \end{equation}
    Finally, this delivers the asserted upper bound, since for either Vandermonde matrix \(\bm V\)
    \begin{equation}
        \|\bm V^{-1}\|_2
        \le
        \|\bm V^{-1}\|_F
        \le
        \sum_{i=0}^n \sum_{j=0}^n |(\bm V^{-1})_{j,i}|.
    \end{equation}
\end{proof}

\begin{proposition}
    \label{prop:binomial_type_formula}
    The following formula holds for \(n \in \mathbb N\)
    \begin{equation}
        \sum_{i=0}^n \prod_{k=0, k \neq i}^n \frac{1}{(i+k)|i-k|}
        =
        \frac{4^n}{(2n)!}.
    \end{equation}
\end{proposition}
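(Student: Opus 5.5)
The proof is a direct computation: split each summand's product into its two factors, evaluate each in closed form as factorials, and recognise the resulting sum as a full row of binomial coefficients. The factorisation is
\[
\prod_{k\neq i}\frac{1}{(i+k)|i-k|}
=\Bigl(\prod_{k\neq i}|i-k|\Bigr)^{-1}\Bigl(\prod_{k\neq i}(i+k)\Bigr)^{-1}.
\]

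The first factor is the usual node-separation product from the proof of Theorem~\ref{theo:general_AC_bounds}. Splitting into $k<i$ and $k>i$ gives
\[
\prod_{k\neq i}|i-k| = i!\,(n-i)!.
\]

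The second factor needs a case split at $i=0$, where the removed factor $i+k$ with $k=i$ would be zero.

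For $i=0$ we have directly $\prod_{k=1}^n k = n!$, so the $i=0$ summand equals $1/(n!)^2=\binom{2n}{n}/(2n)!$.

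For $1\le i\le n$, the full product over $k=0,\dots,n$ is $i(i+1)\cdots(i+n)=(n+i)!/(i-1)!$. Removing the factor $2i$ belonging to $k=i$ gives
\[
\prod_{k\neq i}(i+k) = \frac{(n+i)!}{2\, i!}.
\]
The $i$th summand therefore becomes
\[
\frac{2\,i!}{(n+i)!\; i!\,(n-i)!}=\frac{2}{(n+i)!\,(n-i)!}=\frac{2}{(2n)!}\binom{2n}{n+i}.
\]

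Adding all summands, and using the symmetry $\binom{2n}{n+i}=\binom{2n}{n-i}$ to account for the factor $2$, gives
\[
\frac{1}{(2n)!}\Bigl(\binom{2n}{n}+2\sum_{i=1}^n\binom{2n}{n+i}\Bigr)=\frac{1}{(2n)!}\sum_{j=0}^{2n}\binom{2n}{j}=\frac{4^n}{(2n)!}.
\]
This is the asserted identity.

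The only delicate point is the bookkeeping of the $k=i$ factor $2i$ and the separate treatment of $i=0$. An alternative would be a divided-difference argument with nodes $k^2$, since $(i+k)(i-k)=i^2-k^2$, but the factorial computation above is more direct, so I would use it.
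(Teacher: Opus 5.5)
Your proof is correct and follows the paper's argument essentially step for step. It uses the same separate treatment of $i=0$, the same evaluations $\prod_{k\neq i}|i-k| = i!\,(n-i)!$ and $\prod_{k\neq i}(i+k) = (n+i)!/(2\,i!)$ for $1\le i\le n$ giving summands $2/\bigl((n+i)!\,(n-i)!\bigr)$, and the same binomial-theorem finish. The only cosmetic difference is that you absorb the factor $2$ via the symmetry of the binomial coefficients, whereas the paper extends the sum to $i=-n,\dots,n$ and substitutes $j=n-i$.
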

\begin{proof}
    For \(i=0\), we have
    \begin{equation}
        \prod_{k=1} \frac{1}{k^2} = \frac{1}{(n!)^2}.
    \end{equation}
    For \(i \ge 1\), we separate the two products and rewrite
    \begin{equation}
        \prod_{k=0, k \neq i}^n \frac{1}{|i-k|} 
        =  
        \prod_{k=0}^{i-1} \frac{1}{(i-k)} \prod_{k=i+1}^{n} \frac{1}{(k-i)} = \frac{1}{i! (n-i)!},
    \end{equation}
    and
    \begin{equation}
        \prod_{k=0, k \neq i}^n \frac{1}{i+k} 
        = 
        2i \prod_{k=0}^n \frac{1}{i+k} 
        = 
        2i \frac{(i-1)!}{(n+i)!} 
        = 
        \frac{2 i!}{(n+i)!}.
    \end{equation}
    Together this yields
    \begin{equation}
        \prod_{k=0, k \neq i}^n \frac{1}{(i+k)|i-k|}
        =
        \frac{1}{i! (n-i)!} \frac{2 i!}{(n+i)!} = \frac{2}{(n-i)!(n+i)!}.
    \end{equation}
    Combining the case where \(i=0\) with the case where \(i \ge 1\), and summing this over \(i=0,\ldots, n\), we get
    \begin{equation}
        \begin{split}
            \sum_{i=0}^n \prod_{k=0, k \neq i}^n \frac{1}{(i+k)|i-k|}
            &= \frac{1}{(n!)^2} + 2 \sum_{i=1}^n \frac{1}{(n-i)!(n+i)!}\\
            &= \sum_{i=-n}^n \frac{1}{(n-i)!(n+i)!}
        \end{split}
    \end{equation}
    Setting \(j=n-i\), so that \(j=0,\ldots,2n\), yields
    \begin{equation}
        \begin{split}
            \sum_{i=-n}^n \frac{1}{(n-i)!(n+i)!} 
            &=
            \sum_{j=0}^{2n} \frac{1}{j! (2n-j)!} \\
            &=
            \frac{1}{(2n)!} \sum_{j=0}^{2n} {2n \choose j}\\
            &= \frac{4^n}{(2n)!}.
        \end{split}
    \end{equation}
\end{proof}

\newpage

\bibliographystyle{plainnat}
\bibliography{references}  

\end{document}

%% file: figures/general-conditioning.tex
\begin{figure}[h]
    \centering
    \includegraphics[width=0.95\textwidth]{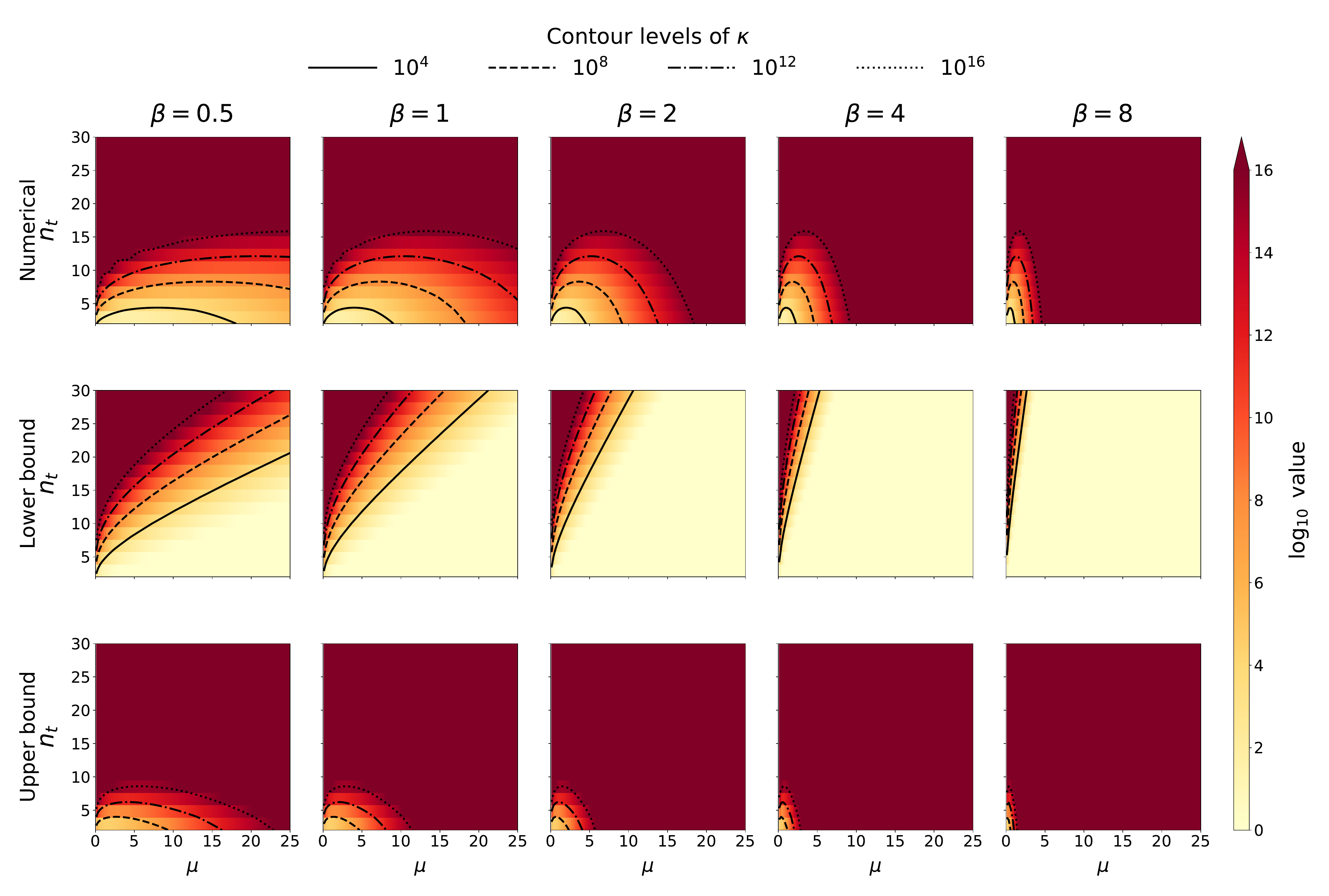}
    \caption{
        A demonstration that the numerical condition numbers satisfy the analytical bounds developed for the discretized general AC problem in the case where \(n_s=100\).
        From top to bottom, the rows show the numerical estimates of the condition number \(\kappa(\bm R)\) defined in \eqref{eq:general_AC_reg_mat} as well as the lower bound, and the upper bounds given in Theorem~\ref{theo:general_AC_bounds}.
        The columns correspond to different values of \(\beta\), while the contour lines indicate the levels \(10^4\), \(10^8\), \(10^{12}\), and \(10^{16}\), with \(10^{16}\) treated as numerical infinity. 
    }
    \label{fig:general_conditioning}
\end{figure}

%% file: figures/db-conditioning.tex
\begin{figure}[h]
    \centering
    \includegraphics[width=0.95\textwidth]{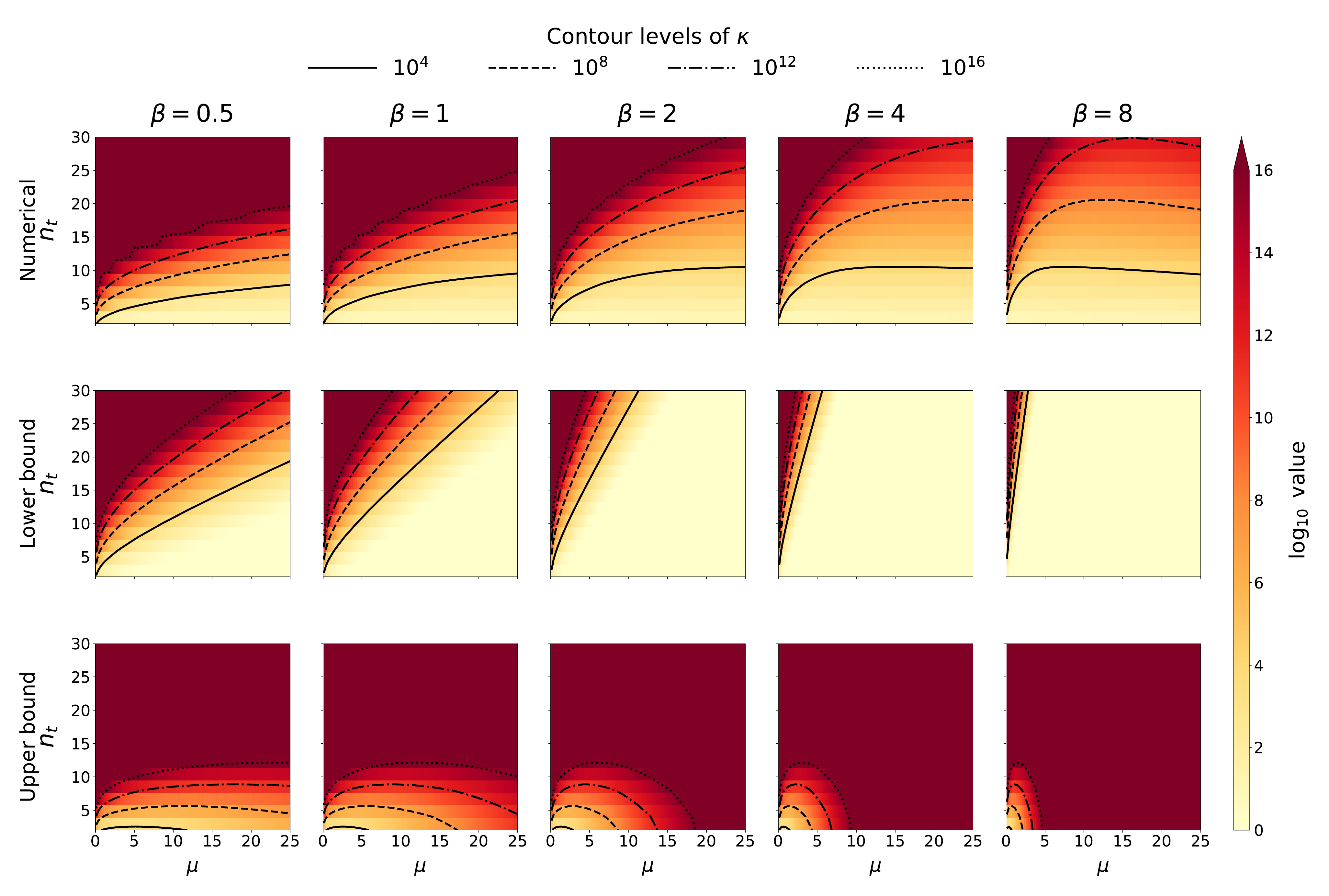}
    \caption{
        A demonstration that the numerical condition numbers satisfy the analytical bounds developed for the discretized detailed-balanced AC problem in the case where \(n_s = 100\).
        From top to bottom, the rows show the numerical estimates of the condition number \(\kappa(\bm R^{\text{db}})\) defined in \eqref{eq:db_AC_reg_mat} as well as the lower bound and the upper bounds given in Theorem~\ref{theo:db_AC_bounds}.
        The columns correspond to different values of \(\beta\), while the contour lines indicate the levels \(10^4\), \(10^8\), \(10^{12}\), and \(10^{16}\), with \(10^{16}\) treated as numerical infinity.
    }
    \label{fig:db_conditioning}
\end{figure}